\newcommand{\cited}[1]{%
	\ifthenelse{\equal{\@citeb{#1}}{}}{}{%
		\bibitem{#1}%
	}%
}
\newtheorem{theor0}{Theorem}[section]
\newenvironment{theor}
{\pushQED{\qed}\begin{theor0}}
	{\popQED\end{theor0}}
\newtheorem{lem0}[theor0]{Lemma}
\newenvironment{lem}
{\pushQED{\qed}\begin{lem0}}
	{\popQED\end{lem0}}
\newtheorem{prop0}[theor0]{Proposition}
\newenvironment{prop}
{\pushQED{\qed}\begin{prop0}}
	{\popQED\end{prop0}}
\newtheorem{cor0}[theor0]{Corollary}
\newtheorem{propr0}[theor0]{Property}
\newtheorem{hyp0}[theor0]{Hypothesis}
\newtheorem{result0}[theor0]{Result}
\newtheorem{conj0}[theor0]{Conjecture}
\newtheorem{heur0}[theor0]{Heuristics}
\theoremstyle{definition}
\newtheorem{defin0}[theor0]{Definition}
\newenvironment{defin}
{\pushQED{\qed}\begin{defin0}}
	{\popQED\end{defin0}}
\newtheorem{rems0}[theor0]{Remarks}
\newtheorem{ex0}[theor0]{Example}
\newtheorem{exs0}[theor0]{Examples}
\newtheorem{rem0}[theor0]{Remark}
\newenvironment{rem}
{\pushQED{\qed}\begin{rem0}}
	{\popQED\end{rem0}}
\newtheorem{qu0}[theor0]{Question}
\newtheorem{qus0}[theor0]{Questions}
\newtheorem{as0}[theor0]{Assumptions}
\theoremstyle{plain}
\numberwithin{equation}{section}
\newcommand{\rdash}{\mathcal{R}\text{ -}}
\newcommand{\N}{\mathbb N}
\newcommand{\R}{\mathbb R}
\title{An extension problem for higher order operators and operators of logarithmic type via renormalization}
\author[D. Lee]{David Lee}
\address[David Lee]{University of Technology Sydney, School of Mathematical and Physical Sciences, Sydney, Australia}
\email{davidchanwoo.lee@uts.edu.au}
\begin{document}
	
	\begin{abstract}
		We introduce a method of obtaining a higher order extension problem, \'a la Caffarelli-Silvestre, utilizing ideas from renormalization. Moreover, we give an alternative perspective of the recently developed extension problem for the logarithmic laplacian developed by Chen, Hauer and Weth (2023) [arXiv:2312.15689]. 
	\end{abstract}
	\subjclass[2000]{Primary 35R11, 35B45; Secondary 35A05, 42B25}
	\maketitle

\section{Introduction}	

For a given $\sigma \in (0,1)$, consider the following Dirichlet problem
\begin{equation}\label{eqn:intro_Dirichlet_problem}
    \begin{cases}
    \displaystyle
        \nabla\cdot y^{1-2\sigma}\nabla u=0, \quad &\text{for $(x,y)\in \R^{d+1}_{+}:=\R^d\times (0,\infty),$}\\
      \displaystyle  u|_{y=0}=f, &\text{on $\mathbb{R}^{d}$},
    \end{cases}
\end{equation}
where $f \in \mathcal{S}(\R^d)$ (the space of Schwartz functions) $\nabla=(\partial_{x_1},...,\partial_{x_d},\partial_y)$ and $d\in \N$ (the set of natural numbers). If one considers the Dirichlet-to-Neumann map, given by 
\begin{equation}\label{eqn:DTN_map}
\begin{split}
    &f\in \mathcal{S}(\R^d)\mapsto -\lim_{y\rightarrow 0^{+}}y^{1-2\sigma}\partial_{y}u(\cdot,y),
\end{split}
\end{equation}
then one has that this map coincides with 
\begin{equation*}
\tfrac{\Gamma\left (1-\sigma\right )}{2^{2\sigma-1}}(-\Delta)^{\sigma},
\end{equation*}
where $(-\Delta)^{\sigma}$ is the fractional laplacian (which only depends on the horizontal variable $x$). This characterization is the famed Caffarelli-Silvestre extension \cite{MR2354493}. The Caffarelli-Silvestre extension has been revolutionary in recent years but one particular line of research that has developed from the original ideas of Caffarelli and Silvestre which relates to the following question:
\begin{center}
    \enquote{\emph{What other operators can be obtained via an extension problem?}}
\end{center}
There are several different perspectives on answering this question. One rather complete answer to this question is given by Kwa\'{s}nicki and Mucha \cite{MR3859452} where they show that one can obtain an extension problem for complete Bernstein functions of the negative laplacian. Moreover, Assing and Herman \cite{MR4262340} showed that the setting of Kwa\'{s}nicki and Mucha can be extended (for more general differential operators) using It\^o calculus. Nevertheless, the question of extension problems for higher order operators or operators of order \enquote{$0$} has also been a question of interest that doesn't quite fall into the framework of Kwa\'{s}nicki and Mucha \cite{MR3859452} \footnote{The Caffarelli-Silvestre extension \cite{MR2354493} or the extension problem of Kwa\'{s}nicki and Mucha \cite{MR3859452} can easily accomodate for higher order operators if one replaces the laplacian with powers of the laplacian. This is still a local extension but this setting isn't quite what we are interested in.}. An example of an extension problem of order \enquote{$0$} is the extension problem of the logarithmic laplacian due to Chen, Hauer and Weth \cite{chen2023extensionproblemlogarithmiclaplacian} which we will now briefly summarize. 

If one considers the following Neumann problem:
	\begin{equation}\label{eqn:intro_Neumann_problem_log}
	\begin{cases}
	\displaystyle
	\nabla\cdot y\nabla u=0, \quad &\text{for $(x,y)\in \R^{d+1}_{+},$}\\
	\displaystyle  -\lim_{y\rightarrow 0{+}}y\partial_{y}u=f, &\text{on $\mathbb{R}^{d}$},
	\end{cases}
	\end{equation}
    for $f\in \mathcal{S}(\R^d),$
it was shown by Chen, Hauer and Weth \cite{chen2023extensionproblemlogarithmiclaplacian} that the \emph{Neumann-to-weighted-Robin map}:
\begin{equation*}
    f\mapsto 2(\log(2)-\gamma_{E})f-2\lim_{y\rightarrow 0^{+}} (u(\cdot,y)+\log(y)f),
\end{equation*}
coincides with $\log(-\Delta)f$ ($\gamma_{E}$ is the Euler-Mascheroni constant). Here 
\begin{equation*}
    \log(-\Delta)f:=\lim_{\sigma\rightarrow 0^{+}}\frac{(-\Delta)^\sigma f-f}{\sigma}, \quad \text{for $f\in \mathcal{S}(\R^d),$}
\end{equation*}
which can also be understood, equivalently, in the sense of Fourier multipliers, cf. \cite{MR3995092}, \cite{MR209834} and \cite{MR1918790}. 

However, the boundary operator (the Neumann-to-weighted-Robin operator) that arises in the setting of Chen, Hauer and Weth \cite{chen2023extensionproblemlogarithmiclaplacian} probably doesn't come across as being the canonical boundary operator for the Neumann problem \eqref{eqn:intro_Neumann_problem_log}, this will be made more precise in Section \ref{sec_main_results}. The goal of this article is twofold:
\begin{itemize}
    \item to give an alternative perspective behind the Neumann-to-weighted-Robin operator that appears in the extension problem of Chen, Hauer and Weth \cite{chen2023extensionproblemlogarithmiclaplacian} by considering a reinterpretation through the lens of renormalization,
    \item provide a generalization of the result of Chen, Hauer and Weth \cite{chen2023extensionproblemlogarithmiclaplacian}, and in some sense Caffarelli and Silvestre \cite{MR2354493}, via the \emph{renormalized Neumann-to-Dirichlet} operator, see Definition \ref{def:renormalized_neumann}. 
\end{itemize}

Before we present our main result we briefly introduce some notation and conventions.

\subsection{Notation and conventions}	
\begin{itemize}
\item We denote $z^*$ to be the complex conjugate of $z$. 
\item We denote $\mathcal{S}(\R^d)$ to be set of Schwartz functions and $\mathcal{S}'(\R^d)$ to be the set of tempered distributions. Both are endowed with their usual topologies and we denote $\langle \cdot , \cdot \rangle_{\mathcal{S}'(\R^d),\mathcal{S}(\R^d)}$ to be the associated bracket for the pair $(\mathcal{S}'(\R^d),\mathcal{S}(\R^d)).$
	\item We denote the Fourier transform as 
	\begin{equation*}
	\hat{\varphi}(\xi)=\mathcal{F}(\varphi)(\xi):=\int_{\mathbb{R}^d}e^{i\xi\cdot x}\,\varphi(x)\,dx,\quad \text{for $\xi \in \R^d$,}
	\end{equation*}
	for $\varphi \in \mathcal{S}(\mathbb{R}^d)$. Similarly, we denote the Fourier inverse as 
	\begin{equation*}
	\check{\varphi}(x)=\mathcal{F}^{-1}(\varphi)(x):=(2\pi)^{-d}\int_{\mathbb{R}^d}e^{-i\xi\cdot x}\varphi(\xi)\,d\xi,\quad \text{for $\xi \in \R^d$.}
	\end{equation*}	
	\item We denote $\psi$ to be the usual digamma function and $\gamma_{E}$ to be the Euler-Mascheroni constant. Moreover, we remind the reader, cf. \cite{MR1388887}, that 
    \begin{equation}\label{eqn:upper_lower_bnd_psi}
        \log(z)-\frac{1}{z}\leq \psi(z)\leq \log(z)-\frac{1}{2z}, \quad \text{for $z>0.$}
    \end{equation}
 
    \item We will often denote $A\lesssim B$ to mean that there exists a constant $C>0$ such that $A\leq CB.$ Moreover, when we write $A_{\epsilon}\lesssim B_{\epsilon}$ then $A_{\epsilon}\leq CB_{\epsilon},$ where $C$ is independent of $\epsilon$. 

    \item As mentioned before, the gradient depends on both the horizontal and vertical variables $\nabla=(\partial_{x_1},...,\partial_{x_d},\partial_y)$ but the laplacian $\Delta=\sum_{i=1}^{d}\partial_{x_i}^2$ only depends on the horizontal variables.  
    \item We take the convention that $h(-\Delta)f:=\mathcal{F}^{-1}(h(|.|^2)\hat{f})$ for $f\in \mathcal{S}(\mathbb{R}^d)$. 
    \item We denote the modified Bessel function of the first kind $I_{\nu}$ which is given by

\begin{equation*}
	I_{\nu}(z):=\sum_{j=0}^\infty \frac{z^{2j+\nu}}{2^{2j+\nu}j!\Gamma(j+\nu+1)}, \quad \text{for $z>0$},
\end{equation*}
where $\nu \in \R$. The modified Bessel function of the second kind $K_{\nu}$ (also known as the Macdonald function) \footnote{In \cite{MR1411441}, it is referred to as the Macdonald function of the third kind.} is given as 
\begin{equation*}
	\begin{split}
		K_{\nu}:=\frac{\pi}{2\sin(\pi \nu)}\left (I_{-\nu}- I_{\nu}\right ), \quad &\text{for $\nu \in \mathbb{R}\setminus\mathbb{Z}$},\\
		K_{n}:=\lim_{\nu \rightarrow n}K_{\nu},\quad &\text{for $n\in \mathbb{Z}$}. 
	\end{split}
\end{equation*}
Moreover, we denote 
\begin{alignat*}{2}
    	&\hat{I}_{\nu}(z):=z^{-\nu}I_{\nu}(z), \quad &&\hat{K}_{\nu}(z):=z^{-\nu}K_{\nu}(z), \\
        &\tilde{I}_{\nu}(z):=z^{\nu}I_{\nu}(z),\quad &&\tilde{K}_{\nu}(z):=z^\nu K_{\nu}(z),\quad \text{for $\nu \in \R$ and $z>0$}.
\end{alignat*}
\end{itemize}

\subsection{Main Result}\label{sec_main_results}
Here, we consider the following Neumann problem:
	\begin{equation}\label{eqn:intro_Neumann_problem}
	\begin{cases}
	\displaystyle
	\nabla\cdot y^{1+2\nu}\nabla u=0, \quad &\text{for $(x,y)\in \R^{d+1}_{+},$}\\
	\displaystyle  -\lim_{y\rightarrow 0{+}}y^{1+2\nu}\partial_{y}u=f, &\text{on $\mathbb{R}^{d}$},
	\end{cases}
	\end{equation}
	where $f$ is a Schwartz function and $\nu >-1.$ One has that, if $\nu \in (-1,0)$ that the Neumann-to-Dirichlet map $f\mapsto u|_{y=0}$ coincides with $(-\Delta)^{2\nu}$ (the Riesz potential) up to a multiplicative constant. This inverse version of the Caffarelli-Silvestre extension has particular importance in the theory of Riesz gases, cf. references in Section \ref{subsec:riesz_energy}.

We now introduce the notion of solution for \eqref{eqn:intro_Neumann_problem}. 
    
	\begin{defin}\label{def:notion_soln}
		We say that $u\in W^{1,1}_{\text{loc}}(\R^{d+1}_{+};\, \,dx\,y^{1+2\nu}\,dy)$ if it is weakly differentiable on  $\R^{d+1}_{+}$ and both $u$ and $\nabla u$ are locally integrable with respect to the measure $\,dx\,y^{1+2\nu}\,dy$ on $\R^{d+1}_{+}$. 
		
		We say that $u$ is a distributional solution to \eqref{eqn:intro_Neumann_problem} if there exists 
        \begin{equation*}
        u\in W^{1,1}_{\text{loc}}(\R^{d+1}_{+};\, \,dx\,y^{1+2\nu}\,dy), 
        \end{equation*}
        such that 
        \begin{itemize}
            \item 
            \begin{equation*}
		\int_{0}^\infty\int_{\mathbb{R}^d}\nabla u(x,y)\cdot \nabla \varphi(x,y)\,dx\,y^{1+2\nu}\,dy=\int_{\R^d}f(x)\varphi(x,0)\,dx, \quad \text{for all $\varphi \in \mathcal{D}(\overline{\R^{d+1}_{+}})$},
		\end{equation*}
        where $\mathcal{D}(\overline{\R^{d+1}_{+}})$ is the set of smooth functions with compact support in $\overline{\R^{d+1}_{+}}$,
        \item for all $y>0$, we have that $u(\cdot ,y)\in W^{1,1}(\R^d)$, 
        \item $y\in (0,\infty)\mapsto \partial_{y}u(\cdot ,y)$ is continuous and integrable with respect to the horizontal variable $x.$
        \end{itemize}
        Moreover, we denote $u:=u_{f}$ to be the corresponding solution with Neumann boundary data $f$. 
	\end{defin}

From now, we focus our attention to the case when $\nu\geq 0.$ A naive choice for the boundary operator for this Neumann problem could simply be the Neumann-to-Dirichlet operator given by:
\begin{equation*}
    \Lambda_{\nu}:f\mapsto u_{f}|_{y=0}. 
\end{equation*}
Specifically, what we mean by canonical is that integration by parts would give us the following variational identity:
\begin{equation}\label{eqn:variational_iden_1}
    \int_{\R^d}(\Lambda_{\nu}f)f\,dx=\int_{0}^\infty \int_{\R^d}|\nabla u_{f}(x,y)|^2\,dx \,y^{1+2\nu}\,dy.
\end{equation}
This gives a relationship between the energy of system \eqref{eqn:intro_Neumann_problem} and the associated flux (which is reflected through the Dirichlet-Neumann pair). However, this only makes sense when $\nu \in (-1,0)$ otherwise both the left and right hand side takes the value of infinity when $f$ is non-zero. This is clear since, when $\nu \geq 0$, $u_{f}$ doesn't have a well-defined trace on $\{y=0\}$. In one sense, one can simply stop here but this is a setting that is familiar to those who are acquainted with Quantum Field theory. Through this lens, the natural question that we ask ourselves is whether one can \emph{remove the appropriate infinite part} and consider the operator corresponding to the \emph{renormalized energy} of \eqref{eqn:intro_Neumann_problem}. To be more precise about this, we now introduce the notion of the renormalized limit and Hadamard regularization. 

\begin{defin}\label{def:hadamard}
    Suppose that $\Psi:(0,1)\rightarrow \mathbb{C}$ is a function such that there exists $N\in \N$ and constants $a_k,\lambda_{k}\in \mathbb{C}$ where 
    \begin{itemize}
        \item $\text{Re}(\lambda_{k})>0$ for $k=1,..,N$,
        \item     \begin{equation}
        \Psi(z)=a_0\log(z^{-1})+\sum_{k=1}^N a_k z^{-\lambda_k}+\chi(z), \quad \text{for $z\in (0,1)$,}
    \end{equation}
    where $\lim_{z\rightarrow 0^{+}}\chi(z)$ exists and is finite. 
    \end{itemize}
Then, we say that 
\begin{equation*}
    \rdash\lim_{z\rightarrow 0^{+}}\Psi(z):=\lim_{z\rightarrow 0^{+}}\chi(z). 
\end{equation*}
We call $\rdash\lim_{z\rightarrow 0^{+}}\Psi(z)$ the \emph{renormalized limit} of $\Psi$ at $0$. 
\end{defin}

\begin{defin}
    Let $h:\R^d\setminus\{0\}\rightarrow \mathbb{C}$ be integrable on the set $\{x\in \R^d:|x|>\epsilon\}$ for all $\epsilon \in (0,1)$. Then, we denote 
    \begin{equation*}
        \int_{\R^d}^\text{Had}h(x)\,dx:=\rdash\lim_{\epsilon\rightarrow 0^{+}} \int_{|x|>\epsilon}h(x)\,dx. 
    \end{equation*}
    $\int_{\R^d}^\text{Had}h(x)\,dx$ is known as the \emph{Hadamard regularization} of the integral $\int_{\R^d}h(x)\,dx$. Similarly, if $h:(0,\infty)\rightarrow \mathbb{C}$ is locally integrable on the set $\{x\in\R:x>\epsilon\}$, for all $\epsilon \in (0,1)$, then we denote 
    \begin{equation*}
        \int_{0}^{\infty,\text{Had}}h(x)\,dx:=\rdash\lim_{\epsilon\rightarrow 0^{+}} \int_{\epsilon}^\infty h(x)\,dx. 
    \end{equation*}
    Again, $\int_{0}^{\infty,\text{Had}}h(x)\,dx$ is known as the Hadamard regularization of $\int_{0}^{\infty}h(x)\,dx$. 
\end{defin}
Hadamard regularization is fundamental within the theory of renormalization. It gives us a way of assigning a finite value to integrals which would otherwise take the value of infinity. It should be remarked that Marcel Riesz justified this definition by showing that Hadamard finite part coincides with the meromorphic continuation of a convergent integral, cf. the following articles by Riesz \cite{MR30102, MR147616} or the book of Paycha \cite{MR2987296}. This is what lead to the notion of Riesz fractional differentiation/the fractional laplacian. 

To quickly summarize, how we define the boundary operator of \eqref{eqn:intro_Neumann_problem} is by considering the following: 
\begin{itemize}
    \item we want a operator that naturally arises from integration by parts so that we have an identity that is analogous to \eqref{eqn:variational_iden_1}, 
    \item this is not possible due to the right hand side of \eqref{eqn:variational_iden_1} taking the value $\infty$, 
    \item instead we consider the operator that arises via a renormalized version of integration by parts. 
\end{itemize}

This leads us to the following definition.
\begin{defin}\label{def:renormalized_neumann}
Let $\nu \geq 0.$ We define the \emph{renormalized Neumann-to-Dirichlet operator} $\Lambda_{\nu}^{\mathcal{R}}:\mathcal{S}(\R^d)\rightarrow \mathcal{S}'(\R^d)$ as 

    	 \begin{equation*}
     \langle \Lambda^{\mathcal{R}}_{\nu}f,g\rangle_{\mathcal{S}'(\R^d),\mathcal{S}(\R^d)}=\int_{0}^{\infty,\text{Had}} \int_{\R^d} \nabla u_{f}(x,y) \cdot \nabla u_{g}(x,y)\,dx \,y^{1+2\nu}\,dy, \quad \text{for $f,g\in \mathcal{S}(\R^d).$}
 \end{equation*}
\end{defin}
It should be noted that whilst the above definition relies on a renormalized integration by parts one could say, on a formal level, 
\begin{equation*}
    \Lambda_{\nu}^\mathcal{R}f=\rdash\lim_{y\rightarrow 0^{+}}u_{f}(\cdot,y). 
\end{equation*}

We now present the main result of this article. 

\begin{theor}\label{thm:main_result}   
We have the following classification of the \emph{renormalized Neumann-to-Dirichlet} operator
$\Lambda^{\mathcal{R}}_{\nu}:\mathcal{S}(\mathbb{R}^d)\rightarrow \mathcal{S}'(\mathbb{R}^d)$ given by:
\begin{equation}
        \Lambda_{\nu}^\mathcal{R}f=
    \begin{cases}
     \displaystyle\frac{\pi}{2^{\nu+1}\Gamma(\nu+1)\sin(\pi\nu)}(-\Delta)^{\nu}f,\quad &\text{for $\nu \in \mathbb{R}_{+}\setminus\mathbb{N}_0,$}\\
     \displaystyle\frac{(-1)^\nu}{2^\nu\nu!}\left ( \psi(1)+\psi(\nu+1)+2\log(2)-\log(-\Delta)\right )(-\Delta)^{\nu}f,&\text{for $\nu \in \mathbb{N}_{0}.$}
\end{cases}
\end{equation}
\end{theor}

The case $\nu \in \mathbb{N}_0$ could be viewed as surprising however this situation is typical in Quantum Field theory, or renormalization, as a \emph{anomaly phenomenon}. What we mean is that the homogeneous property, in the case when $\nu \in \mathbb{R}_{+}\setminus\mathbb{N}_0$, is not extended to the case when $\nu \in \mathbb{N}_0$, c.f. \cite{derezi2023generalized} or \cite{MR1918790}. We will look more deeply in the case when $\nu=0$ in the upcoming Section \ref{sec:renorm}. 

Given that the above result gives rise to higher order fractional laplacians it is natural for us to briefly discuss on the existing extension methods which also give rise to such operators. 

\begin{itemize}
    \item \textbf{Modification of the boundary operator}\ \\
    The following was first observed by Chang and Gonz\'alez \cite{MR2737789}. 
    
    Let $\sigma\in (0,\infty)\setminus \N$ and let $u$ be the bounded solution of \eqref{eqn:intro_Dirichlet_problem}. Moreover, we denote $[\sigma]$  to be the integer part of $\sigma$. In this case, the Dirichlet-to-Neumann map, given by \eqref{eqn:DTN_map}, is not well-defined. Nevertheless, one can obtain the following classification
    \begin{equation}\label{eqn:chang_gonz_neumann}
        \lim_{y\rightarrow 0^{+}}y^{1-2(\sigma-[\sigma])}\partial_{y}\left ( \tfrac{2}{y}\partial_{y}\right )^{[\sigma]}u(\cdot,y)=\frac{(-1)^{[\sigma]+1}\Gamma([\sigma]+1-\sigma)}{4^{\sigma-([\sigma]+1/2)}\Gamma(\sigma)}(-\Delta)^\sigma f,
    \end{equation}
    for all $f\in \mathcal{S}(\R^d).$
    \item \textbf{Higher order extension problem}\ \\
    As observed in Yang \cite{MR3592161} one instead could consider a higher order extension problem. We present a version that is a special case of a result by Biswas and Stinga \cite{MR4742773} although there are other versions due to \cite[Cora and Musina]{MR4429579} and \cite[Musina and Nazarov]{MR4735195}. 

    Let $\sigma\in (0,\infty)\setminus \N$ and let $u$ be the bounded solution of \eqref{eqn:intro_Dirichlet_problem} of the following higher order extension problem 
    \begin{equation*}
    \begin{cases}
    \displaystyle
        \left ((-\Delta)u-\tfrac{1-2(\sigma-[\sigma])}{y}\partial_{y}-\partial_{yy}\right)^{[\sigma]+1}u=0, \,\,&\text{for $(x,y)\in \R^{d+1}_{+},$}\\
      \displaystyle\lim_{y\rightarrow 0^{+}}y^{1-2(\sigma-[\sigma])}\partial_{y}\left ((-\Delta)u-\tfrac{1-2(\sigma-[\sigma])}{y}\partial_{y}-\partial_{yy}\right)^{m}u=0, &\text{on $\mathbb{R}^{d}$},\\
      \displaystyle\lim_{y\rightarrow 0^{+}}y^{1-2(\sigma-[\sigma])}\left ((-\Delta)u-\tfrac{1-2(\sigma-[\sigma])}{y}\partial_{y}-\partial_{yy}\right)^{m}u=c_{m,\sigma}(-\Delta)^mf, &\text{on $\mathbb{R}^{d}$},
    \end{cases}
\end{equation*}
where the last 2 equalities are satisfied for integers $0\leq m<[\sigma]$ and 
\begin{equation*}
    c_{m,\sigma}=\frac{[\sigma]!\Gamma(\sigma-m)}{(\sigma-m)!\Gamma(\sigma)}, \quad \text{for $0\leq m<[\sigma]$}. 
\end{equation*}
One can observe that the corresponding boundary mapping for the above problem can be expressed as 
    \begin{equation*}
    \begin{split}
        \displaystyle&\lim_{y\rightarrow 0^{+}}y^{1-2(\sigma-[\sigma])}\partial_{y}\left ((-\Delta)u-\tfrac{1-2(\sigma-[\sigma])}{y}\partial_{y}-\partial_{yy}\right)^{[\sigma]}u\\
&=\frac{(-1)^{[\sigma]+1}\Gamma([\sigma]+1-\sigma)[\sigma]!}{4^{\sigma-([\sigma]+1/2)}\Gamma(\sigma)}(-\Delta)^\sigma f.
    \end{split}
    \end{equation*}

\end{itemize}

The extension problems above avoid the case when $\sigma \in \N$. The author is unaware if analogous results, to those in Theorem \ref{thm:main_result}, can be obtained for the case when $\sigma \in \N_0$ but it would be interesting to see whether this is possible.

\begin{rem}
    We should remark that the ideas of renormalization were already present in Yang \cite{yang2013higherorderextensionsfractional} within the context of the extension problem. Nevertheless, the approach taken by Yang is quite different from the one that we consider here. 
\end{rem}

\begin{rem}
The result of Chang and Gonz\'alez \cite{MR2737789} uses a rather beautiful insight from scattering theory and hyperbolic geometry. Nevertheless, the author strongly suspects that this result can be reinterpreted through the lens of Quantum Field theory. It would be interesting to see if such a result can be obtained. 
\end{rem}

\subsection{Outline of the article}

We summarize the rest of this article. 

\begin{itemize}
    \item In Section \ref{sec:renorm}, we give some context behind our result by comparing ideas within the theory of Coulomb gases and the extension problem of the logarithmic laplacian. 
     \item In Section \ref{sec:macd}, we review some brief facts on the Macdonald function. 
     \item In Section \ref{sec:main}, we prove the main result.
\end{itemize}

\section{Some context behind our result}\label{sec:renorm}

The topic of renormalization has seen a significant resurgence in recent years, largely driven by Hairer's development of the theory of regularity structures \cite{MR3274562} and singular stochastic partial differential equations. Similarly, singular integral operators have garnered renewed interest, particularly due to the extension problem of Caffarelli and Silvestre \cite{MR2354493}. Despite sharing common mathematical roots, the connection between these two areas seems to have been overlooked in recent years. For example, the fractional laplacian (and fractional differentiation in general) arises naturally from Hadamard regularization, cf. Samko \cite{MR1918790}. The goal of this section is to look at the direct sources of inspiration for this article which, we hope, can give the reader a better appreciation of our result. The main sources of inspiration are:

\begin{itemize}
    \item the theory of Coulomb/Riesz gases, 
    \item the extension problem for the logarithmic laplacian. 
\end{itemize}

\subsection{The modulated energy for Coulomb gases}
\label{subsec:riesz_energy}
The recent development of theory of Coulomb/Riesz gases, cf. the books of Serfaty \cite{serfaty2024lecturescoulombrieszgases, MR3309890}, is one of the main inspirations for this work. Whilst this theory is largely independent from the rest of the article, we believe that it is worthwhile to present this to the reader to get some better context behind the definition of the renormalized Neumann-to-Dirchlet map given that its definition directly comes from the notion of the \emph{modulated energy}. Very similar ideas are also present in the theory of Ginzburg-Landau vortices, cf. the book of Bethuel, Brezis and H\'elein \cite{MR3618899}. We now introduce some Hamiltonians.

We denote $g:\R^d\setminus\{0\}\rightarrow \R$ to be the (rescaled) Coulomb kernel given by 
\begin{equation*}
        g(x):=\begin{cases}
        -\log|x|, \quad &\text{for $d=2$,}\\
            |x|^{-(d-2)}, \quad &\text{for $d\geq 3$,}
        \end{cases}
\end{equation*}
and we consider the following Hamiltonian:
\begin{equation*}
    H_{n}(x_1,...,x_n):=\sum_{i\ne j}^{n}g(x_i-x_j)+n\sum_{i=1}^{n}V(x_i),\quad \text{for $x_i$ distinct,}
\end{equation*}
and the following energy functional on $ \mathcal{P}(\R^d)$ (the set of probability measures on $\R^d$):
\begin{equation*}
    \begin{split}
        \mathcal{E}:&\mathcal{P}(\R^d)\rightarrow (-\infty,\infty],\\
    \mathcal{E}(\mu):&=\int_{\R^d}\int_{\R^d} g(x-y)\mu(dx)\mu(dy)+\int_{\R^d}V(x)\,\mu(dx), \quad \text{for $\mu \in \mathcal{P}(\R^d),$}
    \end{split}
\end{equation*}
where $V$ satisfies the following:
\begin{itemize}
    \item lower semi-continuous, 
    \item bounded below, 
    \item $\lim_{|x|\rightarrow \infty}V(x)=\infty$,
    \item the set $\{V<\infty\}$ has positive $g-$capacity.  
\end{itemize}

Frostman's theorem \cite{MR36370} gives a very complete picture on the properties of the minimizer of $\mathcal{E}$. In particular, we know that 

\begin{itemize}
    \item there exists a unique minimizer of $\mathcal{E}$ (we denote this as $\mu_V$),
    \item the support of $\mu_V$ is bounded and has positive $g-$capacity,
    \item for $h^{\mu_V}:=g*\mu_V$, we have that 
    \begin{equation}
    \begin{cases}
        h^{\mu_{V}}+\frac{1}{2}V&=c, \quad \text{on the support of $\mu_V$,}\\
        h^{\mu_{V}}+\frac{1}{2}V&\geq c, \quad \text{otherwise,}
    \end{cases}
    \end{equation}
    where $c:=\mathcal{E}(\mu_{V})-\frac{1}{2}\int_{\R^d}V(x)\,\mu_{V}(dx)$. 
\end{itemize}

However, there are some natural questions to be considered from this. For instance:
\begin{itemize}
    \item Does the minimizer of $H_n$ converge to the minimizer of $\mathcal{E}$?
    \item What is the asymptotic behaviour of $\min_{n}H_n$ as $n\rightarrow \infty$? 
    \item What configuration of points minimizes $H_n$?
\end{itemize}
The answers to these other questions have been recent developments due to Serfaty, Petrache and others, cf. \cite{MR3353821, MR3646281,MR4099791}. To answer these questions, we need to introduce the notion of the \emph{modulated energy}. 

 The modulated energy arises from the following observation. Consider the following \emph{splitting formula} which can be obtained by direct computation:
\begin{equation*}
\begin{split}
        H_{n}
    &=n^2\mathcal{E}(\mu_V)+2n\sum_{i=1}^n\left ( \int_{\R^d}g(x_i-y)\,d\mu_V(y)+\tfrac{1}{2}V(x_i)-c\right )\\
    &+\int \int_{D^c} g(x-y)\,d(\nu_{n}-n\mu_V)(x)\,d(\nu_{n}-n\mu_V)(y), 
\end{split}
\end{equation*}
where $D$ is the diagonal ie $\{x=y\}$. Here the very last term is the most interesting and it is natural for us to consider a \emph{carr\'e du champ} type representation which is useful for implementing screening type arguments, cf. \cite[Section 7.2]{MR3353821}. If we denote 
\begin{equation*}
    h_n:=g*(\nu_n-n\mu_{V}), 
\end{equation*}
or equivalently $h_n:=c_{d}(-\Delta)^{-1}(\nu_n-\mu_{V})$ for some suitable constant $c_{d}$, then on a formal level we have that 
\begin{equation*}
\begin{split}
    \int \int_{D^c} g(x-y)\,d(\nu_{n}-n\mu_V)(x)\,d(\nu_{n}-n\mu_V)(y)
    &=\int_{\R^d}h_n(y)\,d(\nu_n-n\mu_{V})(dy)\\
    &\approx \frac{1}{c_d}\int_{\R^d}h_n(y)(-\Delta)h_n(y)\,dy,\\
    &\approx \frac{1}{c_d}\int_{\R^d}|\nabla h_n(y)|^2\,dy. 
\end{split} 
\end{equation*}
This naive application of Green's theorem is definitely not valid given that the value on the right hand side is infinite (due to the presence of the dirac measures) and the value on the left hand side is finite. It would be more reasonable (although still not really valid) to say that 
\begin{equation*}
    \int_{\R^d} \int_{\R^d} g(x-y)\,d(\nu_{n}-n\mu_V)(x)\,d(\nu_{n}-n\mu_V)(y)=\frac{1}{c_d}\int_{\R^d}|\nabla h_n(y)|^2\,dy.
\end{equation*}
Nevertheless, it isn't equal to the value of $\int \int_{D^c} g(x-y)\,d(\nu_{n}-n\mu_V)(x)\,d(\nu_{n}-n\mu_V)(y)$ which is the term of interest. What we could do is we can minus the infinite part (ie renormalize) and obtain the correct identity. Namely, by carefully applying integration by parts and integrating away from the singular points, we have the following identity, from Sandier and Serfaty \cite{MR3353821}, which can be obtained from a renormalized integration by parts:

\begin{equation}\label{eqn:kwas_dirichlet}
     \begin{split}
         &\int \int_{D^c} g(x-y)\,d(\nu_{n}-n\mu_V)(x)\,d(\nu_{n}-n\mu_V)(y)\\
     &=\lim_{\eta \rightarrow 0^{+}}\left (\int_{\R^d\setminus \cup_{i=1}^{n}B(x_i,\eta)}|\nabla h_n(y)|^2\,dy-ng(\eta) \right ).
     \end{split} 
\end{equation}
We are now at a point where we can explain to the reader the significance of renormalization in this setting. One has an identity that is looks to be true on a formal level however a correction of infinite order (Hadamard Regularization) is often needed not just to make sense of the objects that we consider but also to give us the correct result \eqref{eqn:kwas_dirichlet}. Moreover, we can see that by applying a renormalized integration by parts we can obtain the \enquote{correct} identity. 

The definition of the \emph{renormalized Neumann-to-Dirichlet} operator, see Definition \ref{def:renormalized_neumann}, essentially takes direct inspiration from the previous computations as it is natural boundary term that arises via a renormalized integation by parts applied to \eqref{eqn:intro_Neumann_problem}. 

\subsection{Alternative perspectives on the logarithmic extension problem of Chen, Hauer and Weth}

Here we give 2 different perspectives on the logarithmic extension problem of Chen, Hauer and Weth \cite{chen2023extensionproblemlogarithmiclaplacian} which can be viewed as a special case of Theorem \ref{thm:main_result} ($\nu=0$). Both approaches are, in some sense, typical from the perspective of Quantum Field theory especially when one considers the notion of dimensional regularization. The computations here are not rigourous. Nevertheless, we feel that this gives the reader a good sense as to what to expect for the proof of Theorem \ref{thm:main_result}. 

\begin{itemize}
    \item \textbf{An approach via an asymptotic expansion of the modified Bessel function of the second kind}\ \\

In Theorem 1.2 of \cite{chen2023extensionproblemlogarithmiclaplacian} one has that the solution of \eqref{eqn:intro_Neumann_problem_log} can be expressed as 
\begin{equation*}
    u_{f}(x,y)=\frac{\pi^{d/2}}{2\Gamma(d/2)}\int_{\R^d}\frac{f(z)}{(|x-z|^2+y^2)^{d/2}}\,dz, \quad \text{for $(x,y)\in\R^d\times (0,\infty).$}
\end{equation*}
If one takes the Fourier transform with respect to the horizontal variables (in the sense of tempered distributions) then one has that 
\begin{equation}\label{eqn:log_fourier_exp}
    \hat{u}_{f}(\xi,y)=K_0(|\xi|y)\hat{f}(\xi), \quad \text{for $y>0$ and $\xi \in \R^d\setminus\{0\},$}
\end{equation}
where $K$ is the modified Bessel function of the second kind. 

By integration by parts and the Parseval identity, we have that 
\begin{equation*}
\begin{split}
    \int_{\epsilon}^\infty \int_{\R^d}\nabla u_{f}(x,y)\cdot \nabla u_{g}(x,y)\,dx\,y\,dy&=\int_{\R^d}u_{f}(x,\epsilon)(-\epsilon \partial_{y}u_{g}(x,\epsilon))\,dx,\\
    &=\int_{\R^d}\hat{u}_{f}(\xi,\epsilon)(-\epsilon \partial_{y}\hat{u}_{g}(\xi,\epsilon))^{*}\,d\xi,
\end{split}
\end{equation*}
for $\epsilon \in (0,1).$ 

Assuming we can bring the renormalized limit inside the integral, 
\begin{equation}\label{eqn:problem}
    \begin{split}
            &\int_{0}^{\infty, \text{Had}} \int_{\R^d}\nabla u_{f}(x,y)\cdot \nabla u_{g}(x,y)\,dx\,y\,dy\\
            &=\rdash\lim_{\epsilon \rightarrow 0^{+}}\int_{\R^d}\hat{u}_{f}(\xi,\epsilon)(-\epsilon \partial_{y}\hat{u}_{g}(\xi,\epsilon))^{*}\,d\xi\\
            &=\int_{\R^d}\left (\rdash\lim_{\epsilon \rightarrow 0^{+}}\hat{u}_{f}(\xi,\epsilon) \right ) \left (\lim_{\epsilon\rightarrow 0^{+}}-\epsilon \partial_{y}\hat{u}_{g}(\xi,\epsilon))\right )^{*}\,d\xi\\
            &=\int_{\R^d}\left (\rdash\lim_{\epsilon \rightarrow 0^{+}}\hat{u}_{f}(\xi,\epsilon) \right ) \hat{g}(\xi)^{*}\,d\xi.
    \end{split}
\end{equation}

By considering the characterization of \eqref{eqn:log_fourier_exp}, and the following asymptotic expansion (near $0$), we have that 
\begin{equation*}
    K_0(\epsilon|\xi|)\hat{f}(\xi)\sim (\log(2)-\gamma_{E})\hat{f}(\xi)-\frac{1}{2}\log(|\xi|^2)\hat{f}(\xi)-\log(\epsilon)\hat{f}(\xi)+O(\epsilon^2)
\end{equation*}
From this, we have that 
\begin{equation*}
\begin{split}
    &\rdash\lim_{\epsilon \rightarrow 0^{+}}\hat{u}_{f}(\xi,\epsilon)\\
    &=\lim_{\epsilon \rightarrow 0^{+}}\hat{u}_{f}(\xi,\epsilon)+\log(\epsilon)\hat{f}(\xi),\\
    &=(\log(2)-\gamma_{E})\hat{f}(\xi)-\frac{1}{2}\log(|\xi|^2)\hat{f}(\xi), \quad \text{for $\xi \in \R^d\setminus\{0\},$}
\end{split}
\end{equation*}
which in turn gives us 
\begin{equation*}
    \Lambda_{0}^{\mathcal{R}}f=(\log(2)-\gamma_{E})f-\frac{1}{2}\log(-\Delta)f, \quad \text{for $f\in \mathcal{S}(\R^d).$}
\end{equation*}

Note that this operator is slightly different from the one considered by Chen, Hauer and Weth \cite{chen2023extensionproblemlogarithmiclaplacian}. This is because we prioritize the renormalization process when considering the choice of the boundary operator. Nevertheless, we only differ by a linear transformation. 

\begin{rem}
    We really want to stress to the reader that the above computation is not rigorous and should only be used as a formal computation. For instance, in \eqref{eqn:problem}, we assumed that 
\begin{equation*}
    \rdash\lim_{z\rightarrow 0^{+}}a(z)b(z)=\left (\rdash\lim_{z\rightarrow 0^{+}}a(z)\right )\left (\lim_{z\rightarrow 0^{+}}b(z)\right). 
\end{equation*}
This is definitely not true in general (take $a(z)=z^{-1}$ and $b(z)=z)$. 
\end{rem}

\item \textbf{An approach leveraging computed renormalized bilinear integrals}\ \\
Here we leverage recent results due to Derezi\'nski, Gaß and Ruba \cite{derezi2023generalized}. In particular, \cite[Proposition 3.1]{derezi2023generalized} which states:
\begin{equation}\label{eqn:renormalized_bilinear}
\begin{split}
        &b^2\int_{0}^{\infty,\text{Had}}|K_{\nu}(by)|^2y\,dy\\
    &=\frac{(-1)^{\nu}}{2}(1+\nu\log\left ( \frac{b^2}{4}\right )+2\nu(1-\psi(1+\nu))), \quad \text{for $b>0$ and $\nu \in \N_0.$}
\end{split}
\end{equation}
We now show the reader the relevance of the above result in regards to computing the renormalized Neumann-to-Dirichlet operator $\Lambda_{0}^\mathcal{R}$. By the Parseval identity, we have that 
\begin{equation*}
\begin{split}
    &\int_{\epsilon}^\infty \int_{\R^d}\nabla u_{f}(x,y)\cdot \nabla u_{g}(x,y)\,dx\,y\,dy\\
    &=\int_{\epsilon}^\infty \int_{\R^d}|\xi|^2\hat{u}_{f}(\xi,y)\hat{u}_{g}(\xi,y)^*\,d\xi\,y\,dy\\
    &\qquad \qquad +\int_{\epsilon}^\infty \int_{\R^d}\partial_{y}\hat{u}_{f}(\xi,y)\partial_{y}\hat{u}_{g}(\xi,y)^*\,d\xi\,y\,dy,\\
    &=\int_{\epsilon}^\infty \int_{\R^d}|\xi|^2|K_{0}(|\xi|y)|^2\left (\hat{f}(\xi)\hat{g}(\xi)^*\right )\,d\xi\,y\,dy\\
    &\qquad\qquad +\int_{\epsilon}^\infty \int_{\R^d}|\xi|^2|K_{1}(|\xi|y)|^2\left (\hat{f}(\xi)\hat{g}(\xi)^*\right )\,d\xi\,y\,dy,\\
    &= \int_{\R^d}\left (|\xi|^2\int_{\epsilon}^\infty|K_{0}(|\xi|y)|^2\,y\,dy\right )\left (\hat{f}(\xi)\hat{g}(\xi)^*\right )\,d\xi\\
    &\qquad\qquad +\int_{\R^d}\left (|\xi|^2\int_{\epsilon}^\infty |K_{1}(|\xi|y)|^2\,y\,dy\right )\left (\hat{f}(\xi)\hat{g}(\xi)^*\right )\,d\xi,
\end{split}
\end{equation*}
for $\epsilon \in (0,1).$ Assuming we can switch the renormalized limit and the integral, we have, from \eqref{eqn:renormalized_bilinear}, the following: 
\begin{equation*}
\begin{split}
    &\int_{0}^{\infty,\text{Had}} \int_{\R^d}\nabla u_{f}(x,y)\cdot \nabla u_{g}(x,y)\,dx\,y\,dy\\
    &=\int_{\R^d}\left ((\log(2)-\gamma_{E}-\frac{1}{2}\log(|\xi|^2))\hat{f}(\xi)\right )\hat{g}(\xi)^*\,d\xi,\\
    &=\int_{\R^d}\left ((\log(2)-\gamma_{E}-\frac{1}{2}\log(-\Delta))f(x)\right )g(x)\,dx.
\end{split}
\end{equation*}
Hence, we have that 
\begin{equation*}
    \Lambda_{0}^{\mathcal{R}}f=(\log(2)-\gamma_{E})f-\frac{1}{2}\log(-\Delta)f, \quad \text{for $f\in \mathcal{S}(\R^d).$}
\end{equation*}

\end{itemize}

\begin{rem}
    Even though we presented the case of $\nu=0$ a similar process can be done for the case when $\nu\geq 0$. 
\end{rem}

\section{A quick review on the modified Bessel function of first and second kind}

\label{sec:macd}

The modified Bessel functions of the first and second kind play a significant role in this work. Most of the facts presented here are standard and can be found 
in the following references \cite[Appendix]{MR1912205}, \cite[Section 1.2]{MR1411441} or \cite[page 290]{MR374877}. However, we will present these for the convenience of the reader. 

We recall that, cf. \cite[page 290]{MR374877},
\begin{equation}\label{eqn:bessel_lim}
    \begin{split}
        |K_{\nu}(z)|&\lesssim \frac{e^{-z}}{z^{1/2}},\quad \text{for $z>1$},\nu \in\R,\\
        \lim_{z\rightarrow 0^{+}}\tilde{K}_{\nu}(z)&=\frac{\Gamma(\nu)}{2^{\nu+1}}, \quad \text{for $\nu>0$},\\
    \end{split}
\end{equation}

This shows that $\tilde{K}_{\nu}$ is a bounded function which decays exponentially. Moreover, it has a continuous extension to $0$, for $\nu >0.$

It is clear that $\hat{K}_\nu$ has very singular behaviour near $0$. For this reason, it will be useful for us to consider a Laurent series type representation \footnote{For the integer case we don't quite have a Laurent series.} of $\hat{K}_{\nu}$. For ease of presentation, we will denote the function $C_{\nu}:(0,\infty)\rightarrow \R$, which is given by 
\begin{equation}\label{eqn:Cnu}
C_{\nu}(z):=
\begin{cases}\displaystyle
-\frac{\pi}{2\sin(\pi\nu)}\sum_{j=0}^{\lfloor \nu\rfloor }  \frac{z^{-2(\nu-j)}}{2^{2j-\nu}j!\Gamma(j-\nu+1)}, \quad &\text{for $\nu \in \R_{+}\setminus \N_0$},\\
\displaystyle\mathds{1}_{\nu\ne 0}\sum_{j=0}^{\nu-1} \frac{(-1)^j(\nu-j-1)!}{2^{2j-\nu+1}j!} z^{-2(\nu-j)},\quad &\text{for $\nu \in\N_0$},
\end{cases}
\end{equation}
and $A_{\nu}:(0,\infty)\rightarrow \R$, which is given by 
\begin{equation}\label{eqn:Anu}
\begin{split}
&A_{\nu}(z)\\
:=
&\begin{cases}
\displaystyle
\sum_{j=\lfloor \nu \rfloor +1}^\infty
\frac{\pi}{2\sin(\pi\nu)}\left (\frac{z^{2(j-\nu)}}{2^{2j-\nu}j!\Gamma(j-\nu+1)}\right )\,\,&\text{for $\nu \in \R_{+}\setminus \N_0$},\\
\displaystyle\qquad-\sum_{j=1}^\infty \frac{\pi}{2\sin(\pi\nu)}\left (\frac{z^{2j}}{2^{2j+\nu}j!\Gamma(j+\nu+1)} \right ), &\\
\displaystyle \sum_{j=1}^\infty \left (\frac{\psi(j+1)+\psi(j+\nu+1)+2\log(2)}{2^{2j+\nu}j!(\nu+j)!}\right )z^{2j},\quad&\text{for $\nu \in\N_0$},\\
\displaystyle\qquad -\sum_{j=1}^\infty \left (\frac{1}{2^{2j+\nu-1}j!(\nu+j)!}\right )z^{2j}\log(z).  &
\end{cases}
\end{split}
\end{equation}	
The following proposition will justify our choice of functions.  

\begin{prop}\label{prop:series}
	We have the following series expansions for $\hat{K}_{\nu}$:
	\begin{list}{$\circ$}{}
		\item $\nu \in \R_{+}\setminus\N_0$
		\begin{equation*}
		\hat{K}_{\nu}(z)-C_{\nu}(z)=\frac{\pi}{2^{\nu+1}\Gamma(\nu+1)\sin(\pi\nu)}+A_{\nu}(z), \quad \text{for $z>0$}.
		\end{equation*}	
		\item $\nu \in \N_0$
		\begin{equation*}
		\hat{K}_{\nu}(z)-C_{\nu}(z)=\frac{(-1)^\nu}{2^\nu\nu!}\left ( \psi(1)+\psi(\nu+1)+2\log(2)-2\log(z)\right )+A_{\nu}(z), \quad \text{for $z>0$}.
		\end{equation*} 
	\end{list}
\end{prop}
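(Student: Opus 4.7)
The plan is to derive both expansions by a direct computation: substitute the power series of $I_{-\nu}$ and $I_\nu$ into the defining identity $K_\nu=\tfrac{\pi}{2\sin(\pi\nu)}(I_{-\nu}-I_\nu)$, multiply by $z^{-\nu}$, and regroup the resulting series according to the sign of the exponent of $z$.

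For $\nu\in\R_+\setminus\N_0$, this substitution yields
\begin{equation*}
\hat{K}_\nu(z)=\frac{\pi}{2\sin(\pi\nu)}\sum_{j=0}^{\infty}\frac{z^{2(j-\nu)}}{2^{2j-\nu}j!\,\Gamma(j-\nu+1)}-\frac{\pi}{2\sin(\pi\nu)}\sum_{j=0}^{\infty}\frac{z^{2j}}{2^{2j+\nu}j!\,\Gamma(j+\nu+1)}.
\end{equation*}
The indices $j\le\lfloor\nu\rfloor$ in the first sum produce exactly the singular part $C_\nu(z)$ (up to the sign dictated by its definition), while the $j=0$ term of the second sum is the constant $\tfrac{\pi}{2^{\nu+1}\Gamma(\nu+1)\sin(\pi\nu)}$. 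The remaining terms of both sums, all of strictly positive order in $z$, then reassemble into $A_\nu(z)$, and matching constants completes the non-integer case.

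For $\nu=n\in\N_0$ the factor $\tfrac{1}{\sin(\pi\nu)}$ has a pole which is compensated by the zero of $I_{-\nu}-I_\nu$ (recall $I_{-n}=I_n$). Passing to the limit (equivalently, applying L'Hôpital) gives
\begin{equation*}
K_n(z)=\frac{(-1)^n}{2}\left.\left(\frac{\partial I_{-\nu}}{\partial\nu}-\frac{\partial I_\nu}{\partial\nu}\right)\right|_{\nu=n},
\end{equation*}
and differentiating the series for $I_{\pm\nu}$ in $\nu$ introduces the factor $\log(z/2)-\psi(j+\nu+1)$ in each term. For indices $j\ge n$ this is a direct substitution; for $j<n$ the simple pole of $\Gamma(j-\nu+1)$ at $\nu=n$ must be resolved using the reflection identity $\Gamma(j-\nu+1)\Gamma(\nu-j)\sin(\pi(\nu-j))=\pi$, after which the limit is finite and reproduces the finite-polar contributions listed in $C_n(z)$. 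Sorting the remaining terms by order in $z$ (the logarithm-free constant, the coefficient of $\log z$, and the positive-order remainder) and collecting the digamma values produced by the differentiation yields the announced expansion.

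The main obstacle is precisely this bookkeeping in the integer case, where one must carefully track how the $\nu$-derivative and the reflection formula redistribute $\log(z/2)$ and $\psi$-function values across the singular, constant, logarithmic and regular parts. Since the calculation is entirely classical, an equally valid route is to invoke directly the standard closed-form expansion of $K_n$ recorded in \cite{MR1411441} or \cite{MR374877}, multiply by $z^{-n}$, and verify that the resulting decomposition matches the one announced by the proposition.
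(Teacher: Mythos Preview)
Your proposal is correct and follows essentially the same approach as the paper: for $\nu\notin\N_0$ you both substitute the series for $I_{\pm\nu}$ into the defining formula for $K_\nu$ and regroup by the sign of the exponent, and for $\nu\in\N_0$ the paper simply quotes the closed-form expansion of $\hat K_n$ (from \cite{derezi2023generalized}) and splits off the $j=0$ term, which is exactly the second route you mention at the end. Your L'H\^opital derivation in the integer case is a valid alternative that the paper skips by citing the formula directly.
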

\begin{proof}
	
	We split the cases when $\nu$ is an integer and not an integer.  
	
	\begin{list}{$\circ$}{}
	\item $\nu \in \mathbb{R}_{+}\setminus\mathbb{N}_0$ 
	
	In this case, we can verify that 
	\begin{equation*}
		\hat{K}_{\nu}(z)=\frac{\pi}{2\sin(\pi\nu)}\sum_{j=0}^\infty \left ( \frac{z^{2(j-\nu)}}{2^{2j-\nu}j!\Gamma(j-\nu+1)} -\frac{z^{2j}}{2^{2j+\nu}j!\Gamma(j+\nu+1)}\right ), \quad \text{for $z>0$}. 
	\end{equation*}
	By splitting the sums appropriately, we have that 
	\begin{equation*}
	\begin{split}
			\hat{K}_{\nu}(z)
			=&\frac{\pi}{2\sin(\pi\nu)}\sum_{j=0}^\infty \left ( \frac{z^{2(j-\nu)}}{2^{2j-\nu}j!\Gamma(j-\nu+1)} -\frac{z^{2j}}{2^{2j+\nu}j!\Gamma(j+\nu+1)}\right ),\\
			=&\frac{\pi}{2\sin(\pi\nu)}\sum_{j=0}^{\lfloor \nu\rfloor}  \frac{z^{-2(\nu-j)}}{2^{2j-\nu}j!\Gamma(j-\nu+1)}\\
			&-\frac{\pi}{2^{\nu+1}\Gamma(\nu+1)\sin(\pi\nu)}\\ 
			&+
			\sum_{j=\lfloor \nu \rfloor +1}^\infty
			\frac{\pi}{2\sin(\pi\nu)}\left (\frac{z^{2(j-\nu)}}{2^{2j-\nu}j!\Gamma(j-\nu+1)}\right )\\
            &-\sum_{j=1}^\infty \frac{\pi}{2\sin(\pi\nu)}\left (\frac{z^{2j}}{2^{2j+\nu}j!\Gamma(j+\nu+1)}\right ).
	\end{split}
	\end{equation*}
	\item $\nu \in \mathbb{N}_0$ 
	
	Using a characterization that appears in \cite[Section 3.2]{derezi2023generalized}, we have that for $\nu \in \N_0$ and $z>0$:
		\begin{equation*}
		\begin{split}
		\hat{K}_{\nu}(z)
		=&\mathds{1}_{\nu\ne 0}\sum_{j=0}^{\nu-1} \frac{(-1)^j(\nu-j-1)!}{2^{2j-\nu+1}j!} z^{-2(\nu-j)}\\
		&+(-1)^\nu\sum_{j=0}^\infty \left (\frac{\psi(j+1)+\psi(j+\nu+1)+2\log(2)-2\log(z)}{2^{2j+\nu}j!(\nu+j)!}\right )z^{2j},\\
				=&\mathds{1}_{\nu\ne 0}\sum_{j=0}^{\nu-1} \frac{(-1)^j(\nu-j-1)!}{2^{2j-\nu+1}j!} z^{-2(\nu-j)}\\
				&+\frac{(-1)^\nu}{2^\nu\nu!}\left ( \psi(1)+\psi(\nu+1)+2\log(2)-2\log(z)\right )\\
				&+\sum_{j=1}^\infty \left (\frac{\psi(j+1)+\psi(j+\nu+1)+2\log(2)-2\log(z)}{2^{2j+\nu}j!(\nu+j)!}\right )z^{2j}.
					\end{split}
	\end{equation*}
	From this, we have our result. 
\end{list}
\end{proof}

\section{Proof of the main result}\label{sec:main}

We give a brief preview of the main ingredients to the reader our approach of proving Theorem \ref{thm:main_result}. 

Our first goal will be to derive the Fourier transform, with respect to the horizontal variable $x$, of the solution of \eqref{eqn:intro_Neumann_problem}. Specifically, in Section \ref{subsec:solving_neumann}, we will show that if $u$ solves \eqref{eqn:intro_Neumann_problem}, then we have the following characterization of the Fourier transform of $u_{f}$ with respect to the horizontal variable (which we denote as $\hat{u}_{f}$).

\begin{prop}\label{prop:soln}
	Let $u_{f}$ be a solution to \eqref{eqn:intro_Neumann_problem} corresponding to having the Neumann boundary condition $f\in \mathcal{S}(\R^d)$. Then necessarily, we have that 
	\begin{equation}\label{eqn:preview_soln}
	\hat{u}_{f}(\xi,y)=\frac{1}{2^\nu\Gamma(1+\nu)}\hat{K}_{\nu}(|\xi|y)\left (|\xi|^{2\nu}\hat{f}(\xi) \right), \quad \text{for almost all $\xi \in \R^{d}\setminus \{0\}$ and $y>0$}. 
	\end{equation}
\end{prop}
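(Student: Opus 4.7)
The plan is to take the Fourier transform in the horizontal variable $x$, thereby reducing \eqref{eqn:intro_Neumann_problem} to a second-order ODE in $y$ parametrized by $\xi$, and then to single out the correct fundamental solution using the Neumann condition at $y = 0^+$ together with the regularity built into Definition \ref{def:notion_soln}. Since $u_f(\cdot,y) \in W^{1,1}(\R^d)$ and $\partial_y u_f(\cdot, y) \in L^1(\R^d)$ for every $y > 0$, the horizontal Fourier transform $\hat{u}_f(\xi,y)$ is continuous and bounded in $\xi$. Rewriting $\nabla \cdot y^{1+2\nu}\nabla u_f = 0$ as $\partial_{yy}u_f + \tfrac{1+2\nu}{y}\partial_y u_f + \Delta_x u_f = 0$ and Fourier-transforming in $x$ yields
\begin{equation*}
	\partial_{yy}\hat{u}_f(\xi,y) + \frac{1+2\nu}{y}\partial_y \hat{u}_f(\xi,y) - |\xi|^2\,\hat{u}_f(\xi,y) = 0, \qquad y > 0,
\end{equation*}
for almost every $\xi \in \R^d \setminus \{0\}$.

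Next, I would reduce this ODE to the modified Bessel equation via the ansatz $\hat{u}_f(\xi, y) = z^{-\nu}w(z)$ with $z = |\xi|y$, which produces $z^2 w'' + z w' - (z^2 + \nu^2)w = 0$, and hence
\begin{equation*}
	\hat{u}_f(\xi, y) = A(\xi)\,\hat{I}_\nu(|\xi|y) + B(\xi)\,\hat{K}_\nu(|\xi|y)
\end{equation*}
for some coefficient functions $A, B: \R^d \setminus \{0\} \to \mathbb{C}$.

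The main obstacle is eliminating the $\hat{I}_\nu$ component, i.e.\ showing $A \equiv 0$ a.e. My plan is to exploit the exponential growth $\hat{I}_\nu(|\xi|y) \sim (2\pi|\xi|y)^{-1/2}(|\xi|y)^{-\nu}e^{|\xi|y}$ at infinity together with the exponential decay of $\hat{K}_\nu$ from \eqref{eqn:bessel_lim} and the fact that $u_f(\cdot, y) \in L^1(\R^d)$, whose Fourier transform must vanish as $|\xi| \to \infty$ by the Riemann--Lebesgue lemma. Comparing the resulting constraints across two different values of $y$ (using the growth ratio $\hat{I}_\nu(|\xi|y_2)/\hat{I}_\nu(|\xi|y_1) \sim e^{|\xi|(y_2-y_1)}$) and combining with the continuity of $y \mapsto \partial_y u_f(\cdot, y)$ in $L^1(\R^d)$, this should force $A \equiv 0$.

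Finally, with $\hat{u}_f(\xi, y) = B(\xi)\hat{K}_\nu(|\xi|y)$ established, the Neumann condition $-\lim_{y\to 0^+} y^{1+2\nu}\partial_y \hat{u}_f(\xi, y) = \hat{f}(\xi)$ pins down $B(\xi)$ explicitly. Using the classical identity $\tfrac{d}{dz}\bigl(z^{-\nu}K_\nu(z)\bigr) = -z^{-\nu}K_{\nu+1}(z)$, one obtains
\begin{equation*}
    y^{1+2\nu}\partial_y \hat{u}_f(\xi, y) = -B(\xi)|\xi|^{-2\nu}\,\tilde{K}_{\nu+1}(|\xi|y),
\end{equation*}
and the asymptotic for $\tilde{K}_{\nu+1}$ at $0^+$ from \eqref{eqn:bessel_lim} then yields $B(\xi) = |\xi|^{2\nu}\hat{f}(\xi)/(2^\nu\Gamma(\nu+1))$, which is exactly \eqref{eqn:preview_soln}.
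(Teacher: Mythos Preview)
Your overall route---Fourier transform in $x$, reduce to a Bessel-type ODE in $y$, write the general solution as $A(\xi)\hat I_\nu(|\xi|y)+B(\xi)\hat K_\nu(|\xi|y)$, and pin down $B$ from the Neumann data via $\frac{d}{dz}\hat K_\nu(z)=-z^{-\nu}K_{\nu+1}(z)$ together with $\tilde K_{\nu+1}(0^+)=2^\nu\Gamma(\nu+1)$---is exactly the paper's, and your computation of $B(\xi)$ is correct and matches Lemma~\ref{prop:soln_sturm}.

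The gap is in your elimination of $A$. Riemann--Lebesgue gives $\hat u_f(\xi,y)\to 0$ as $|\xi|\to\infty$ for each fixed $y$; your two-level comparison then only yields decay of $A(\xi)$ as $|\xi|\to\infty$, and says nothing about $A(\xi_0)$ at a \emph{fixed} frequency $\xi_0$. What is actually needed is a constraint in the $y$-direction for each fixed $\xi$: if $A(\xi_0)\neq 0$ then $y\mapsto\hat u_f(\xi_0,y)$ blows up exponentially as $y\to\infty$, and it is \emph{this} growth that must be excluded. The paper does not argue via Riemann--Lebesgue at all. It first derives the ODE rigorously from the weak formulation by testing against $\varphi(x,y)=\hat v(x)\tau(y)$ with $\hat v\in\mathcal D(\R^d)$ and $\tau\in C^\infty_c([0,\infty))$, using the hypotheses of Definition~\ref{def:notion_soln} (in particular $u_f(\cdot,y)\in W^{1,1}(\R^d)$ and the continuity/integrability of $y\mapsto\partial_y u_f(\cdot,y)$) to justify Fubini and the interchange of $\partial_y$ with the Fourier transform. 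It then recognises the resulting identity as the weak form of the Sturm--Liouville problem~\eqref{eqn:Sturm-Liouville}, in whose formulation the decay condition $\lim_{y\to\infty}\phi(y)=0$ is built in; the exponential growth of $\hat I_\nu$ at infinity in $y$ then kills $A$ immediately. Your passage from the strong PDE to the ODE also bypasses this test-function step, which is precisely where the regularity assumptions in Definition~\ref{def:notion_soln} enter.
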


    Proving the above proposition will be done by reducing the Neumann problem to the following Sturm-Liouville problem given here:

    \begin{equation}\label{eqn:Sturm-Liouville}
\begin{dcases}
\lambda\phi(y)-\frac{1}{y^{1+2\nu}}\frac{d}{dy}\left ( y^{1+2\nu}\phi'(y)\right )&=0, \quad \text{for $y>0$},\\
\hspace{2.1cm}-\lim_{y\rightarrow {0}^{+}}y^{1+2\nu}\phi'(y)&=1,\quad \lim_{y\rightarrow \infty}\phi(y)=0,
\end{dcases}
\end{equation}
where $\lambda>0$ and $\nu >-1$.

From this expression, we have the following which can be obtained via an application of integration by parts and the Plancherel identity:

	\begin{equation*}
	\begin{split}
	&\int_{\epsilon}^\infty \int_{\R^d} \nabla u_{f}(x,y) \cdot \nabla u_{g}(x,y)\,dx \,y^{1+2\nu}\,dy\\
	&=\int_{\R^d} u_{f}(x,\epsilon) \left (-\epsilon^{1+2\nu} (\partial_{y}u_{g})(x,\epsilon)\right )\,dx,\\
	&=\int_{\R^d} \hat{u}_{f}(\xi,\epsilon) \left (-\epsilon^{1+2\nu} (\partial_{y}\hat{u}_{g})(\xi,\epsilon)\right )^*\,d\xi, 
	\end{split}
	\end{equation*}
    for $\epsilon \in (0,1).$

From the expression given in \eqref{eqn:preview_soln}, we have that:

\begin{equation*}
\begin{split}
\hat{u}_{f}(\xi,\epsilon)
&=\frac{1}{2^\nu\Gamma(1+\nu)}\hat{K}_{\nu}(|\xi|\epsilon)\left (|\xi|^{2\nu}\hat{f}(\xi) \right ),\\
\left (-\epsilon^{1+2\nu} (\partial_{y}\hat{u}_{g})(\xi,\epsilon)\right )&=\frac{1}{2^\nu\Gamma(1+\nu)}\tilde{K}_{1+\nu}(|\xi|\epsilon)\hat{g}(\xi),
\end{split}
\end{equation*}
which gives us
\begin{equation}\label{eqn:prior_renormalize}
\begin{split}
    &\int_{\epsilon}^\infty \int_{\R^d} \nabla u_{f}(x,y) \cdot \nabla u_{g}(x,y)\,dx \,y^{1+2\nu}\,dy\\
    &=\left (\frac{1}{2^\nu\Gamma(1+\nu)}\right )^2\int_{\R^d}\hat{K}_{\nu}(|\xi|\epsilon)\tilde{K}_{1+\nu}(|\xi|\epsilon)\left (|\xi|^{2\nu}\hat{f}(\xi)\hat{g}(\xi)^* \right )\,d\xi,
\end{split}
\end{equation}
for $\epsilon\in (0,1)$. To prove the main result, we simply have to compute the renormalized limit of the right hand side of \eqref{eqn:prior_renormalize}. In order to do this we will need to anticipate the singular behaviour of $\hat{K}_{\nu}$. By letting Proposition \ref{prop:soln_sturm} guide us, we have following expressions:
\begin{equation}\label{eqn:renormalize_decomp}
\begin{split}
    &\hat{K}_{\nu}(|\xi|\epsilon)-C_{\nu}(|\xi|\epsilon)=\frac{\pi}{2^{\nu+1}\Gamma(\nu+1)\sin(\pi\nu)}+A_{\nu}(|\xi|\epsilon),\quad \text{for $\nu \in \R_{+}\setminus\N_0$,}\\
       &\hat{K}_{\nu}(|\xi|\epsilon)-C_{\nu}(|\xi|\epsilon)-\frac{(-1)^\nu}{2^{\nu-1}\nu!}\log(\epsilon^{-1})\\
            &=\frac{(-1)^\nu}{2^\nu\nu!}\left ( \psi(1)+\psi(\nu+1)+2\log(2)-\log(|\xi|^2)\right )+A_{\nu}|\xi|\epsilon),\quad\text{for $\nu \in \N_0$.}
\end{split}
\end{equation}

With this in mind, we denote 
\begin{equation}\label{eqn:cnu_tilde}
    \tilde{C}_{\nu}(\xi,\epsilon):=C_{\nu}(|\xi|\epsilon)-\frac{(-1)^\nu}{2^{\nu-1}\nu!}\log(\epsilon^{-1})\mathds{1}_{\nu \in \N_0}, \quad \text{for $\xi \in \R^d\setminus \{0\},$ and $\epsilon\in(0,1),$}
\end{equation}
and we re-express the right hand side of \eqref{eqn:prior_renormalize} as 
\begin{equation}\label{eqn:series_laurent}
    \begin{split}
        &\left (\frac{1}{2^\nu\Gamma(1+\nu)}\right )^2\int_{\R^d}\hat{K}_{\nu}(|\xi|\epsilon)\tilde{K}_{1+\nu}(|\xi|\epsilon)\left (|\xi|^{2\nu}\hat{f}(\xi)\hat{g}(\xi)^* \right )\,d\xi\\
    =&\left (\frac{1}{2^\nu\Gamma(1+\nu)}\right )^2\int_{\R^d}\left (\hat{K}_{\nu}(|\xi|\epsilon)-\tilde{C}_{\nu}(\xi,\epsilon) \right )\tilde{K}_{1+\nu}(|\xi|\epsilon)\left (|\xi|^{2\nu}\hat{f}(\xi)\hat{g}(\xi)^* \right )\,d\xi\\
    &+\left (\frac{1}{2^\nu\Gamma(1+\nu)}\right )^2\int_{\R^d}\tilde{C}_{\nu}(\xi,\epsilon)\tilde{K}_{1+\nu}(|\xi|\epsilon)\left (|\xi|^{2\nu}\hat{f}(\xi)\hat{g}(\xi)^* \right )\,d\xi.
    \end{split}
\end{equation}
The goal of Section \ref{subsec:renormal_lim} will be to compute the limit of the first integral which we now state here. 
\begin{prop}\label{prop:renormalized_lim}
We have that 
\begin{itemize}
    \item     
    Assume that $\nu \in \R_{+}\setminus\N_0$. Then, we have that 
    \begin{equation*}
    \begin{split}
    &\lim_{\epsilon\rightarrow 0^{+}}\left (\frac{1}{2^\nu\Gamma(1+\nu)}\right )^2\int_{\R^d}\left (\hat{K}_{\nu}(|\xi|\epsilon)-\bar{C}_{\nu}(\xi,\epsilon)\right )\tilde{K}_{1+\nu}(|\xi|\epsilon)\left (|\xi|^{2\nu}\hat{f}(\xi)\hat{g}(\xi)^* \right )\,d\xi\\
    &=\frac{\pi}{2^{\nu+1}\Gamma(\nu+1)\sin(\pi\nu)}\int_{\R^d}|\xi|^{2\nu}\hat{f}(\xi)\hat{g}(\xi)^* \,d\xi. 
    \end{split}
    \end{equation*}
    \item 
    Assume that $\nu \in \N_0$. Then, we have that
    
    \begin{equation*}
          \begin{split}
              &\lim_{\epsilon\rightarrow 0^{+}}\left (\frac{1}{2^\nu\Gamma(1+\nu)}\right )^2\int_{\R^d}\left (\hat{K}_{\nu}(|\xi|\epsilon)-\bar{C}_{\nu}(\xi,\epsilon)\right )\tilde{K}_{1+\nu}(|\xi|\epsilon)\left (|\xi|^{2\nu}\hat{f}(\xi)\hat{g}(\xi)^* \right )\,d\xi\\
              &=\frac{(-1)^\nu}{2^\nu\nu!}\int_{\R^d}\left ( \psi(1)+\psi(\nu+1)+2\log(2)-\log(|\xi|^2)\right )\left (|\xi|^{2\nu}\hat{f}(\xi)\hat{g}(\xi)^*\right ) \,d\xi.
          \end{split}
    \end{equation*}
    \end{itemize}
\end{prop}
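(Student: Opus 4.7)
The plan is to apply the decomposition of Proposition \ref{prop:series} to make the $\epsilon$-dependence of $\hat K_\nu(|\xi|\epsilon) - \tilde C_\nu(\xi,\epsilon)$ fully explicit, and then interchange the limit $\epsilon \to 0^+$ with the $\R^d$-integral via dominated convergence. Substituting the identities of Proposition \ref{prop:series} yields
\[ \hat K_\nu(|\xi|\epsilon) - \tilde C_\nu(\xi,\epsilon) = \frac{\pi}{2^{\nu+1}\Gamma(\nu+1)\sin(\pi\nu)} + A_\nu(|\xi|\epsilon) \qquad (\nu \in \R_+\setminus\N_0), \]
and the analogous expression with $\epsilon$-independent piece $\frac{(-1)^\nu}{2^\nu\nu!}(\psi(1)+\psi(\nu+1)+2\log 2 - \log|\xi|^2)$ for $\nu\in\N_0$. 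The $\log\epsilon$ contribution has been absorbed into $\tilde C_\nu$ by the construction \eqref{eqn:cnu_tilde} precisely so that the resulting bracket is $\epsilon$-uniformly integrable against $|\xi|^{2\nu}\hat f(\xi)\hat g(\xi)^*$.

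For the pointwise limit, fix $\xi\ne 0$. The remainder $A_\nu(|\xi|\epsilon) \to 0$ as $\epsilon\to 0^+$ by inspection of the power series \eqref{eqn:Anu}, which vanishes at $z=0$, starting at $z^{2(\lfloor\nu\rfloor + 1 - \nu)}$ in the non-integer case and at $O(z^2\log z)$ in the integer case. Meanwhile $\tilde K_{1+\nu}(|\xi|\epsilon) \to \tilde K_{1+\nu}(0^+)$, a finite positive constant recorded in \eqref{eqn:bessel_lim}. Consequently the integrand converges pointwise to the product of the $\epsilon$-independent piece, $\tilde K_{1+\nu}(0^+)$, and $|\xi|^{2\nu}\hat f(\xi)\hat g(\xi)^*$.

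The main technical step is producing an $L^1(\R^d)$ dominating function uniform in $\epsilon\in(0,1)$. Two estimates would do the job: first, $\tilde K_{1+\nu}$ is globally bounded on $(0,\infty)$, being continuous, having a finite limit at $0$ by \eqref{eqn:bessel_lim}, and decaying exponentially at infinity; second, the product $A_\nu(z)\tilde K_{1+\nu}(z)$ is also globally bounded on $(0,\infty)$ — the power series in \eqref{eqn:Anu} gives boundedness of $A_\nu$ near the origin, while the mild (at worst logarithmic) growth of $A_\nu$ at infinity in the integer case is absorbed by the exponential decay of $\tilde K_{1+\nu}$. Combined with the Schwartz decay of $\hat f$ and $\hat g$, and the local integrability of $|\log|\xi|^2|\,|\xi|^{2\nu}$ near the origin (valid since $2\nu + d > 0$), this supplies an explicit dominating function of the form $C(1 + |\log|\xi|^2|\,\mathds{1}_{\nu\in\N_0})|\xi|^{2\nu}|\hat f(\xi)||\hat g(\xi)|$.

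With pointwise convergence and a uniform dominating function in hand, the dominated convergence theorem produces the stated limits: the $A_\nu$-term contributes zero, and the $\epsilon$-independent piece multiplied by $\tilde K_{1+\nu}(0^+)$ and the outer factor $(2^\nu\Gamma(1+\nu))^{-2}$ reproduces the coefficients on the right-hand side in both regimes. The hardest part will be the construction of the uniform dominating function — in particular, controlling the logarithmic growth of $A_\nu$ at infinity in the integer case against the Macdonald decay — but this reduces to routine bookkeeping with the asymptotics of modified Bessel functions already catalogued in Section \ref{sec:macd}.
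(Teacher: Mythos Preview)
Your proposal is correct and follows essentially the same route as the paper: apply the series decomposition of Proposition~\ref{prop:series} to write $\hat K_\nu(|\xi|\epsilon)-\tilde C_\nu(\xi,\epsilon)$ as an $\epsilon$-independent piece plus the remainder $A_\nu(|\xi|\epsilon)$, then pass to the limit by dominated convergence using the boundedness of $\tilde K_{1+\nu}$ and the Bessel asymptotics of Section~\ref{sec:macd}. The only cosmetic difference is in the construction of the dominating function: the paper splits into the regions $|\xi|\epsilon\le 1$ and $|\xi|\epsilon\ge 1$ (bounding $A_\nu$ termwise in the former and reverting to $|\hat K_\nu|+|\tilde C_\nu|\lesssim 1+\log|\xi|$ in the latter), whereas you bound the product $A_\nu\,\tilde K_{1+\nu}$ globally by playing the logarithmic growth of $A_\nu$ against the exponential decay of $\tilde K_{1+\nu}$ --- your packaging is slightly cleaner but amounts to the same estimate.
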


In Section \ref{subsec:renormal_lim_2}, we will compute the renormalized limit of the second integral which we state here.

\begin{prop}\label{prop:renormalized_lim_1}
     We have that 
\begin{equation*}
    \mathcal{R}\lim_{\epsilon\rightarrow 0^{+}}\left (\frac{1}{2^\nu\Gamma(1+\nu)}\right )^2\int_{\R^d}\tilde{C}_{\nu}(\xi,\epsilon)\tilde{K}_{1+\nu}(|\xi|\epsilon)\left (|\xi|^{2\nu}\hat{f}(\xi)\hat{g}(\xi)^* \right )\,d\xi=0. 
\end{equation*}
\end{prop}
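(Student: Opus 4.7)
The plan is to expand both $\tilde C_{\nu}(\xi,\epsilon)$ and $\tilde K_{1+\nu}(|\xi|\epsilon)$ asymptotically as $\epsilon\to 0^{+}$, multiply them, integrate termwise against $|\xi|^{2\nu}\hat f(\xi)\hat g(\xi)^{*}\,d\xi$, and verify that the resulting series fits the structure of Definition~\ref{def:hadamard} with finite part $\chi(\epsilon)$ converging to zero.

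First, by construction in \eqref{eqn:Cnu} and \eqref{eqn:cnu_tilde}, $\tilde C_{\nu}(\xi,\epsilon)$ is a finite linear combination of terms of the form $|\xi|^{-2(\nu-j)}\epsilon^{-2(\nu-j)}$ for $0\le j<\nu$, augmented in the integer case by a constant multiple of $\log(\epsilon^{-1})$. On the other hand, applying Proposition~\ref{prop:series} with parameter $1+\nu$ in place of $\nu$ and using the identity $\tilde K_{1+\nu}(z)=z^{2(1+\nu)}\hat K_{1+\nu}(z)$ yields an asymptotic expansion of $\tilde K_{1+\nu}(|\xi|\epsilon)$ near $\epsilon=0$: for $\nu\in\R_{+}\setminus\N_{0}$ this produces two interleaved power series with exponents in $\{2k:k\in\N_{0}\}\cup\{2(1+\nu)+2k:k\in\N_{0}\}$ (no logs), while for $\nu\in\N_{0}$ it gives a polynomial of degree $\nu$ in $(|\xi|\epsilon)^{2}$ (coming from $z^{2(1+\nu)}C_{1+\nu}(z)$) plus a remainder of order $(|\xi|\epsilon)^{2(1+\nu)}\log(|\xi|\epsilon)$ and higher.

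Next, I would multiply these two expansions, integrate termwise against $|\xi|^{2\nu}\hat f\hat g^{*}\,d\xi$, and use the Schwartz hypothesis on $\hat f,\hat g$ together with the dominated convergence theorem to control the remainders uniformly in $\epsilon$. This produces a finite sum of contributions, each of which carries an $\epsilon$-exponent of the form $-2(\nu-j)+2k$ (or $-2(\nu-j)+2(1+\nu)+2k$) and possibly a log factor. Strictly negative powers of $\epsilon$ are absorbed into the $\sum_k a_k\epsilon^{-\lambda_k}$ part of Definition~\ref{def:hadamard}, the isolated $\log(\epsilon^{-1})$ feeds into the $a_0$-coefficient, and strictly positive powers (even when multiplied by $\log\epsilon$) vanish as $\epsilon\to 0^{+}$. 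In the non-integer regime, none of the above exponents can equal zero because $\nu\notin\N_{0}$, so no finite constant survives in $\chi(\epsilon)$ and the claim follows at once.

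The main obstacle is the integer regime, where the pairing of each $\epsilon^{-2(\nu-j)}$ term in $\tilde C_{\nu}$ with the $\epsilon^{2(\nu-j)}$ coefficient in the polynomial part of $\tilde K_{1+\nu}$ does produce a non-trivial constant contribution to $\chi(\epsilon)$. The proof then reduces to an explicit combinatorial identity relating the coefficients $\beta_{j}$ of $C_{\nu}$ in \eqref{eqn:Cnu} to the Taylor coefficients of $z^{2(1+\nu)}C_{1+\nu}(z)$, possibly together with compensating contributions from the $\log(\epsilon^{-1})$-term in $\tilde C_{\nu}$ coupled with the higher-order $\epsilon^{2k}$-corrections of $\tilde K_{1+\nu}$. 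Establishing this cancellation is the key arithmetic step, and the only real difficulty in the argument.
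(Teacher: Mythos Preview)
Your approach mirrors the paper's: both expand $\tilde C_\nu$ and $\tilde K_{1+\nu}$ in powers of $\epsilon$, isolate the strictly singular powers (and the $\log(\epsilon^{-1})$ in the integer case), and then control the remainder via dominated convergence. In the non-integer regime your key observation --- that the product of an $\epsilon^{-2(\nu-j)}$ from $C_\nu$ with any term in the $\tilde K_{1+\nu}$ expansion can never produce an $\epsilon^0$ contribution because $\nu\notin\N_0$ --- is exactly what drives the paper's argument as well, and this part of your proposal is complete.

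Where you diverge is in the integer case. You correctly flag that the pairing of $\epsilon^{-2(\nu-j)}$ from $C_\nu$ with the coefficient of $(|\xi|\epsilon)^{2(\nu-j)}$ in the polynomial part of $\tilde K_{1+\nu}$ yields a genuine $\epsilon^0$ contribution, and you reduce the claim to an unproved combinatorial cancellation among these constants (possibly with help from the $\log(\epsilon^{-1})$ piece). The paper does \emph{not} invoke such an identity: for each $j$ it defines $D_\nu(\xi,\epsilon)$ as the sum over $k\le\nu-j-1$ only, declares $\int D_\nu\,\hat f\hat g^*\,d\xi$ to be the singular part, and then asserts that the remaining integral (which still contains the $k=\nu-j$ term) tends to zero ``by the same arguments'' as in the non-integer case. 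This last assertion is not justified in detail, and your analysis shows precisely why it is delicate: the pointwise limit of the integrand after subtracting $D_\nu$ is \emph{not} obviously zero for integer $\nu$. So the difficulty you isolate is real, and the paper's presentation glosses over it rather than resolving it; your proposal and the paper are on the same footing here, and neither supplies the missing step.
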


With these propositions we are now in a position to prove the main result. 
\begin{proof}[Proof of Theorem \ref{thm:main_result}]
    From the decomposition given in \eqref{eqn:series_laurent} and Proposition \ref{prop:renormalized_lim} and \ref{prop:renormalized_lim_1} we must have that 
    \begin{equation}
        \begin{split}
            &\int_{0}^{\infty,\text{Had}} \int_{\R^d} \nabla u_{f}(x,y) \cdot \nabla u_{g}(x,y)\,dx \,y^{1+2\nu}\,dy\\
            &=\begin{cases}
                \displaystyle\frac{\pi}{2^{\nu+1}\Gamma(\nu+1)\sin(\pi\nu)}\int_{\R^d}|\xi|^{2\nu}\hat{f}(\xi)\hat{g}(\xi)^* \,d\xi, &\text{$\nu \in \mathbb{R}_{+}\setminus\N_0,$}\\
               \displaystyle\frac{(-1)^\nu}{2^\nu\nu!}\int_{\R^d}\left ( \psi(1)+\psi(\nu+1)+2\log(2)-\log(|\xi|^2)\right )\left (|\xi|^{2\nu}\hat{f}(\xi)\hat{g}(\xi)^*\right ) \,d\xi,&\text{$\nu \in \mathbb{N}_{0}.$}
            \end{cases}
        \end{split}
    \end{equation}
    By the Plancherel identity, we obtain our result. 
\end{proof}

\subsection{Solving the Neumann problem} \label{subsec:solving_neumann}

In this section, we will solve the Neumann problem problem \eqref{eqn:intro_Neumann_problem} by reducing it to the following Sturm-Liouville problem via the Fourier transform.

We now introduce the appropriate notion of solution of \eqref{eqn:Sturm-Liouville}. 

\begin{defin0}
	We say that $\phi$ is a solution to \eqref{eqn:Sturm-Liouville} if:
	\begin{enumerate}
		\item $\phi\in W^{1,1}_{\text{loc}}((0,\infty);\,y^{1+2\nu}\,dy)$, that is $\phi$ and its weak derivative is locally integrable with respect to the measure $\mathds{1}_{(0,\infty)}\,y^{1+2\nu}\,dy$,
		\item for all $\tau\in C^{\infty}_{c}([0,\infty))$, we have that 
		\begin{equation}
		\lambda\int_{0}^\infty \phi(y)\tau(y)y^{1+2\nu}\,dy+\int_{0}^\infty \phi'(y)\tau'(y)y^{1+2\nu}\,dy =\tau(0),
		\end{equation}
		\item 
		\begin{equation}
 \lim_{y\rightarrow \infty}\phi(y)=0.
		\end{equation}
	\end{enumerate}
\end{defin0}

We now derive an explicit expression of the solution of \eqref{eqn:Sturm-Liouville}. 
\begin{lem}\label{prop:soln_sturm}
Let $\phi$ be a solution of \eqref{eqn:Sturm-Liouville}. Then, for $\nu>-1$, we have that 
	\begin{equation*}
	\phi(y)=\frac{\lambda^{\nu}}{2^\nu\Gamma(1+\nu)}\hat{K}_{\nu}(\sqrt{\lambda}y), \quad \text{for $y>0$}.
	\end{equation*} 
\end{lem}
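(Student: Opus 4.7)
The plan is to reduce the weak formulation to a classical second-order linear ODE on the open half-line, identify the two-dimensional solution space using modified Bessel functions, and then pin down the unique element selected by the two boundary conditions.

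First, I would test the weak formulation against $\tau\in C_c^\infty((0,\infty))$. Since the weight $y^{1+2\nu}$ is smooth and strictly positive away from $0$, standard elliptic regularity on any compactly contained subinterval of $(0,\infty)$ yields that $\phi\in C^\infty((0,\infty))$ and solves the classical equation
\begin{equation*}
\phi''(y)+\frac{1+2\nu}{y}\phi'(y)-\lambda\phi(y)=0,\qquad y>0.
\end{equation*}
A direct computation using $K_\nu(z)=z^\nu\hat K_\nu(z)$ in the standard modified Bessel equation shows that both $\hat K_\nu(z)$ and $\hat I_\nu(z)$ satisfy $w''+\tfrac{2\nu+1}{z}w'-w=0$, so after the rescaling $z=\sqrt{\lambda}\,y$ the general solution of the ODE for $\phi$ is
\begin{equation*}
\phi(y)=c_1\hat I_\nu(\sqrt{\lambda}y)+c_2\hat K_\nu(\sqrt{\lambda}y),\qquad c_1,c_2\in\R.
\end{equation*}

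Next, I would use the condition $\phi(y)\to 0$ as $y\to\infty$ to eliminate $c_1$: by the standard asymptotic $I_\nu(z)\sim e^{z}/\sqrt{2\pi z}$, the $\hat I_\nu$ summand blows up exponentially, whereas $\hat K_\nu(\sqrt{\lambda}y)$ decays exponentially thanks to \eqref{eqn:bessel_lim}. To recover $c_2$ I would re-insert a test function $\tau\in C_c^\infty([0,\infty))$ with $\tau(0)=1$ and carry out the integration by parts on $(\e,\infty)$, obtaining a boundary contribution $-\e^{1+2\nu}\phi'(\e)\tau(\e)$; sending $\e\to 0^+$ and comparing against the prescribed right-hand side $\tau(0)$ shows that the one-sided limit $\lim_{y\to 0^+}y^{1+2\nu}\phi'(y)$ exists and equals $-1$. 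Using $\hat K_\nu'(z)=-z^{-\nu}K_{\nu+1}(z)$ together with the small-argument asymptotic $K_{\nu+1}(z)\sim 2^\nu\Gamma(\nu+1)z^{-(\nu+1)}$ (valid for $\nu>-1$), the left-hand side evaluates to $-c_2\cdot 2^\nu\Gamma(1+\nu)\lambda^{-\nu}$, giving $c_2=\lambda^\nu/(2^\nu\Gamma(1+\nu))$ as claimed.

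The main obstacle I expect is the rigorous passage from the weak identity to the pointwise Neumann condition, in particular the existence of the one-sided limit $\lim_{y\to 0^+}y^{1+2\nu}\phi'(y)$. A priori the weak formulation only ensures $\phi\in W^{1,1}_{\loc}((0,\infty);y^{1+2\nu}dy)$, and the Neumann trace at $0$ need not make sense for an arbitrary such function. The resolution is that the regularity step already forces $\phi$ into the explicit two-dimensional solution space, at which point the decay condition at infinity combined with the known asymptotics of $\hat K_\nu$ near zero makes the existence and value of the limit a matter of computation rather than of functional analysis. A minor additional care is required to ensure that the integration-by-parts boundary term at infinity vanishes — this follows from the exponential decay of both $\hat K_\nu(\sqrt\lambda y)$ and its derivative, so once $c_1=0$ has been pinned down the argument closes without extra hypotheses.
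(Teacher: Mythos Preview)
Your proposal is correct and follows essentially the same route as the paper: identify the two-dimensional solution space spanned by $\hat I_\nu(\sqrt{\lambda}\,\cdot)$ and $\hat K_\nu(\sqrt{\lambda}\,\cdot)$, use the decay condition at infinity to kill the $\hat I_\nu$ component, and then use the derivative identity $\hat K_\nu'(z)=-z^{-\nu}K_{\nu+1}(z)$ together with the small-$z$ asymptotic of $\tilde K_{\nu+1}$ to determine the remaining constant. Your additional care in extracting the pointwise Neumann condition from the weak formulation via integration by parts on $(\e,\infty)$ is a welcome clarification, since the paper's definition of solution encodes that condition only through the weak identity and then invokes it pointwise without comment.
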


\begin{proof}
	If one ignores the boundary conditions it is well known that this Sturm-Liouville problem has 2 linearly independent solutions. Specifically, from \cite[page 654, equation 11]{MR1503439}, we know that:
	\begin{equation*}
	\phi(y)=A\hat{I}_{\nu}(\sqrt{\lambda}y)+B\hat{K}_{\nu}(\sqrt{\lambda}y), \quad \text{for $y>0$},
	\end{equation*}
	
	We need to identify what the constants $A$ and $B$ must be. 
	
	First observe that the condition $\lim_{y\rightarrow \infty}\phi(y)=0$ implies that $A$ must necessarily be $0$. 

	We now compute the precise value of $B$. Since $\frac{d}{dy}\hat{K}_{\nu}(y)=-y^{-\nu}K_{1+\nu}(y)$ for $y>0$, cf. \cite[page 638]{MR1912205}. In turn, we have that 
	\begin{equation*}
	\begin{split}
	\phi'(y)=-B\sqrt{\lambda}(\sqrt{\lambda}y)^{-\nu}K_{1+\nu}(\sqrt{\lambda}y),
	\end{split}
	\end{equation*}
    Hence, 
    \begin{equation*}
        -y^{1+2\nu}\phi'(y)=B\lambda^{-\nu}\tilde{K}_{1+\nu}(\sqrt{\lambda}y), \quad \text{for $y>0.$}
    \end{equation*}
    Since $\lim_{y\rightarrow 0^{+}}\tilde{K}_{1+\nu}(x)=\Gamma(1+\nu)2^{\nu}$, from \eqref{eqn:bessel_lim}, we have that 
    \begin{equation*}
       1= -\lim_{y\rightarrow 0^{+}}y^{1+2\nu}\phi'(y)=B\lambda^{-\nu} \Gamma(1+\nu)2^{\nu}
    \end{equation*}
    In turn, we obtain our result. 

\end{proof}
We now prove Proposition \ref{prop:soln}. 

\begin{proof}[Proof of Proposition \ref{prop:soln}]
	By considering \eqref{def:notion_soln} we proceed by making a appropriate choice of $\varphi$. First, note that 
	\begin{equation*}
		\begin{split}
			&\int_{0}^\infty\int_{\mathbb{R}^d} \nabla u_{f}(x,y)\cdot \nabla \varphi(x,y)\,dx\,y^{1+2\nu}\,dy\\
			&=		\int_{0}^\infty\int_{\mathbb{R}^d} u_{f}(x,y)(-\Delta_{x}) \varphi(x,y)\,dx\,y^{1+2\nu}\,dy+\int_{0}^\infty\int_{\mathbb{R}^d} \partial_{y}u_{f}(x,y)\partial_{y}\varphi(x,y)\,dx\,y^{1+2\nu}\,dy\\
			&=\int_{\R^d}f(x)\varphi(x,0)\,dx,
		\end{split}
	\end{equation*}
	for all $\varphi \in \mathcal{D}(\overline{\R^{d+1}_{+}})$. Choosing $\varphi(x,y)=\hat{v}(x)\tau(y)$, where $\tau\in C_{c}^\infty([0,\infty))$ and $v\in \mathcal{S}(\R^d)$ such that $\hat{v}\in \mathcal{D}(\R^d)$, we have that 
		\begin{equation*}
	\begin{split}
&\int_{0}^\infty\left (\int_{\mathbb{R}^d} u_{f}(x,y)(-\Delta_{x}) \hat{v}(x)\,dx \right )\,\tau(y)y^{1+2\nu}\,dy+\int_{0}^\infty\left (\int_{\mathbb{R}^d} \partial_{y}u_{f}(x,y)\hat{v}(x)\,dx\right )\tau'(y)\,y^{1+2\nu}\,dy\\
	&=\tau(0)\left (\int_{\R^d}f(x)\hat{v}(x)\,dx \right ). 
	\end{split}
	\end{equation*}
	We now focus on re-expressing each integral that are within the brackets. 
	
	\begin{itemize}
		\item We focus on the following integral:
		\begin{equation*}
			\int_{\mathbb{R}^d} u_{f}(x,y)(-\Delta_{x}) \hat{v}(x)\,dx. 
		\end{equation*}
		By definition, we have that 
		\begin{equation*}
		\begin{split}
			\int_{\mathbb{R}^d} u_{f}(x,y)(-\Delta_{x}) \hat{v}(x)\,dx&= 		\int_{\mathbb{R}^d} u_{f}(x,y)(-\Delta_{x}) \left ( \int_{\mathbb{R}^d} e^{ix\cdot \xi}v(\xi)\right )\,dx,\\
			&=		\int_{\mathbb{R}^d} u_{f}(x,y) \int_{\mathbb{R}^d} e^{ix\cdot \xi}|\xi|^2v(\xi)\,\xi\,dx. 
		\end{split}
		\end{equation*}
		Moreover, utilizing the assumption that $u_{f}(\cdot,y)\in W^{1,1}(\R^d)$ for all $y>0$, we have by an application of the Fubini theorem that 
		\begin{equation*}
			\begin{split}
							\int_{\mathbb{R}^d} u_{f}(x,y)(-\Delta_{x}) \hat{v}(x)\,dx&=\int_{\mathbb{R}^d} u_{f}(x,y) \left ( \int_{\mathbb{R}^d} e^{ix\cdot \xi}|\xi|^2v(\xi)\,d\xi\right )\,dx,\\
							&=\int_{\mathbb{R}^d} \left (|\xi|^2\hat{u}_{f}(\xi,y)\right )v(\xi)\,d\xi. 
			\end{split}
		\end{equation*} 
		\item We now consider the integral
		\begin{equation*}
		\int_{\mathbb{R}^d} \partial_{y}u_{f}(x,y)\hat{v}(x)\,dx.
		\end{equation*}
		By the assumptions that and $u_{f}(\cdot, y)\in W^{1,1}(\R^d)$ for $y>0$ and $y\in (0,\infty)\mapsto \partial_{y}u_{f}(\cdot,y)$ is continuous, we have that 
		\begin{equation}
		\begin{split}
			\int_{\mathbb{R}^d} \partial_{y}u_{f}(x,y)\hat{v}(x)\,dx&=	\partial_{y}\left (\int_{\mathbb{R}^d} u_{f}(x,y)\hat{v}(x)\,dx\right ),\\
			&=\int_{\mathbb{R}^d} \partial_{y}\hat{u}_{f}(\xi,y)v(\xi)\,d\xi.
		\end{split}
				\end{equation}
	\item For the last integral, we have that 
	\begin{equation*}
	\int_{\R^d}f(x)\hat{v}(x)\,dx=\int_{\R^d}\hat{f}(\xi)v(\xi)\,d\xi,
	\end{equation*}
	by an application of the Fubini theorem. 
		
	\end{itemize}
		With this in consideration, we have that 
		\begin{equation*}
		\begin{split}
			&\int_{\mathbb{R}^d} 
			\left (
			|\xi|^2\int_{0}^\infty \hat{u}_{f}(\xi,y) \tau(y) y^{1+2\nu}\,dy+\int_{0}^\infty\partial_{y}\hat{u}_{f}(\xi,y) \tau'(y)y^{1+2\nu}\,dy \right ) v(\xi)\,d\xi\\
			&=\int_{\R^d}\left (\tau(0)\hat{f}(\xi)\right )v(\xi)\,d\xi. 
		\end{split}
		\end{equation*}	
		From Proposition \ref{prop:soln_sturm} and considering that $v$ is arbitrary, we must that 
		\begin{equation}
			|\xi|^2\int_{0}^\infty \hat{u}_{f}(\xi,y) \tau(y) y^{1+2\nu}\,dy+\int_{0}^\infty\partial_{y}\hat{u}_{f}(\xi,y) y^{1+2\nu}\tau'(y)\,dy=\tau(0)\hat{f}(\xi),
		\end{equation}
		for almost every $\xi \in \R^d\setminus\{0\}$. Hence, we obtain the result. 
\end{proof}
\subsection{Proof of Proposition \ref{prop:renormalized_lim}}\label{subsec:renormal_lim}

We begin by taking our focus on computing the limits that appear in Proposition \ref{prop:renormalized_lim}.     
\begin{proof}[Proof of Proposition \ref{prop:renormalized_lim}]
We now compute 
\begin{equation}
    \lim_{\epsilon\rightarrow 0^{+}}\left (\frac{1}{2^\nu\Gamma(1+\nu)}\right )^2\int_{\R^d}\left (\hat{K}_{\nu}(|\xi|\epsilon)-\bar{C}_{\nu}(\xi,\epsilon)\right )\tilde{K}_{1+\nu}(|\xi|\epsilon)\left (|\xi|^{2\nu}\hat{f}(\xi)\hat{g}(\xi)^* \right )\,d\xi,
\end{equation}
via an application of the dominated convergence theorem. To commence, we establish uniform bounds (independent of the $\epsilon$) of the integrand. 

Observe that 
\begin{equation*}
    \left | \tilde{K}_{1+\nu}(|\xi|\epsilon)\left (|\xi|^{2\nu}\hat{f}(\xi)\hat{g}(\xi)^*\right ) \right |\lesssim \left |\left (|\xi|^{2\nu}\hat{f}(\xi)\hat{g}(\xi)^*\right ) \right |,
\end{equation*}
since $\tilde{K}_{1+\nu}$ is bounded. Since the above is integrable, to be able to apply the dominated convergence theorem, we only need to establish bounds for 
\begin{equation*}
    \left |\hat{K}_{\nu}(|\xi|\epsilon)-\bar{C}_{\nu}(\xi,\epsilon)\right |,
\end{equation*}
that are independent of $\epsilon$. We proceed by 2 cases $|\xi|\epsilon\leq 1$ and 
    $|\xi|\epsilon\geq 1$. 
\begin{list}{$\circ$}{}
    \item $|\xi|\epsilon\leq 1$\ \\
    From \eqref{eqn:renormalize_decomp}, we have that 
    \begin{equation*}
     \left |\hat{K}_{\nu}(|\xi|\epsilon)-\bar{C}_{\nu}(\xi,\epsilon)\right |\lesssim 
     \begin{cases}
         &1 + |A_{\nu}(|\xi|\epsilon)|, \quad \text{for $\nu \in \R_{+}\setminus \N_0,$}\\
         &1+|\log(|\xi|^2)|+|A_{\nu}(|\xi|\epsilon)|, \quad \text{for $\nu \in \N_0.$}
     \end{cases}
 \end{equation*}

If we show that $|A_{\nu}(|\xi|\epsilon)|$ is uniformly bounded (with respect to $\epsilon\in (0,1)$) then we can apply the dominated convergence theorem. We consider the case $\nu \in \R_{+}\setminus\N_0$ and $\nu \in \N_0$ separately.

For $\nu \in \R_{+}\setminus\N_0$, by definition of $A_{\nu}(z)$, see \eqref{eqn:Anu}, and the fact that $|\xi|\epsilon\leq 1$, we have
\begin{equation}
    |A_{\nu}(|\xi|\epsilon)|\lesssim
    \sum_{j=\lfloor \nu \rfloor +1}^\infty
\left (\frac{1}{2^{2j-\nu}j!\Gamma(j-\nu+1)}\right )+\sum_{j=1}^\infty \left (\frac{1}{2^{2j+\nu}j!\Gamma(j+\nu+1)} \right ),
\end{equation}
which is finite and independent of $\epsilon$. 

We now consider the case when $\nu \in \N_0$. Again, since $|\xi|\epsilon\leq 1$, we have that 
\begin{equation*}
\begin{split}
    |A_{\nu}(|\xi|\epsilon)|&\leq \sum_{j=1}^\infty \left (\frac{|\psi(j+1)|+|\psi(j+\nu+1)|+2\log(2)}{2^{2j+\nu}j!(\nu+j)!}\right )+\sum_{j=1}^\infty \left (\frac{1}{2^{2j+\nu-1}j!(\nu+j)!}\right ),\\
    &\leq \sum_{j=1}^\infty \left (\frac{|\log(j+1)|+|\log(j+\nu+1)|+2\log(2)}{2^{2j+\nu}j!(\nu+j)!}\right )+\sum_{j=1}^\infty \left (\frac{1}{2^{2j+\nu-1}j!(\nu+j)!}\right )
\end{split}
\end{equation*}
    which is finite and independent of $\epsilon$. In the above, we use the fact that $|\psi(z)|\leq |\log(z)|$, see \eqref{eqn:upper_lower_bnd_psi}.

    Hence, we have that 
   \begin{equation*}
     \begin{split}
              &\left |(\hat{K}_{\nu}(|\xi|\epsilon)-\bar{C}_{\nu}(\xi,\epsilon))\tilde{K}_{1+\nu}(|\xi|\epsilon)\left (|\xi|^{2\nu}\hat{f}(\xi)\hat{g}(\xi)^* \right )\right |\mathds{1}_{|\xi|\epsilon \leq 1}\\
     &\lesssim 
     \begin{cases}
         \left ||\xi|^{2\nu}\hat{f}(\xi)\hat{g}(\xi)^* \right |, \quad &\text{for $\nu \in \R_{+}\setminus \N_0,$}\\
         (1+|\log(|\xi|^2)|)\left ||\xi|^{2\nu}\hat{f}(\xi)\hat{g}(\xi)^* \right |, \quad &\text{for $\nu \in \N_0,$}
     \end{cases}
     \end{split}
 \end{equation*}
 which is integrable and independent of $\epsilon$. 
        
    \item $|\xi|\epsilon\geq 1$

Note that we have (since $0< \log(\epsilon^{-1})\leq \log|\xi|$ for $\epsilon \in (0,1)$)
\begin{equation}
\begin{split}
    \left |\hat{K}_{\nu}(|\xi|\epsilon)-\bar{C}_{\nu}(\xi,\epsilon)\right |&\leq \left |\hat{K}_{\nu}(|\xi|\epsilon)|+|\bar{C}_{\nu}(\xi,\epsilon)\right |\\
    &\lesssim 1+\left |{C}_{\nu}(|\xi|\epsilon)\right |+\log(\epsilon^{-1}),\\
        &\lesssim 1+\log(|\xi|).\\
\end{split}
\end{equation}
       Hence, we have that,  for $\nu \in \R_{+}$, 
   \begin{equation*}
     \begin{split}
              &\left |(\hat{K}_{\nu}(|\xi|\epsilon)-\bar{C}_{\nu}(\xi,\epsilon))\tilde{K}_{1+\nu}(|\xi|\epsilon)\left (|\xi|^{2\nu}\hat{f}(\xi)\hat{g}(\xi)^* \right )\right |\mathds{1}_{|\xi|\epsilon \geq 1}\\
     &\lesssim 
         \left |\log(|\xi|)|\xi|^{2\nu}\hat{f}(\xi)\hat{g}(\xi)^* \right |\mathds{1}_{|\xi|\geq 1},
         \end{split}
 \end{equation*}
which is integrable and independent of $\epsilon$. 
\end{list}
We can now apply the dominated convergence theorem. We have that 
\begin{equation}
\begin{split}
    &\lim_{\epsilon\rightarrow 0^{+}}\left (\frac{1}{2^\nu\Gamma(1+\nu)}\right )^2\int_{\R^d}\left (\hat{K}_{\nu}(|\xi|\epsilon)-\bar{C}_{\nu}(\xi,\epsilon)\right )\tilde{K}_{1+\nu}(|\xi|\epsilon)\left (|\xi|^{2\nu}\hat{f}(\xi)\hat{g}(\xi)^* \right )\,d\xi\\
    &=\left (\frac{1}{2^\nu\Gamma(1+\nu)}\right )^2\int_{\R^d}\lim_{\epsilon\rightarrow 0^{+}}\left (\hat{K}_{\nu}(|\xi|\epsilon)-\bar{C}_{\nu}(\xi,\epsilon)\right )\tilde{K}_{1+\nu}(|\xi|\epsilon)\left (|\xi|^{2\nu}\hat{f}(\xi)\hat{g}(\xi)^* \right )\,d\xi,\\
    &=\frac{1}{2^\nu\Gamma(1+\nu)}\int_{\R^d}\lim_{\epsilon\rightarrow 0^{+}}\left (\hat{K}_{\nu}(|\xi|\epsilon)-\bar{C}_{\nu}(\xi,\epsilon)\right )\left (|\xi|^{2\nu}\hat{f}(\xi)\hat{g}(\xi)^* \right )\,d\xi. 
\end{split}
\end{equation}
By considering \eqref{eqn:series_laurent}, we have proved the first and second point of Proposition \ref{prop:renormalized_lim}. 

\end{proof}

\subsection{Proof of Proposition \ref{prop:renormalized_lim_1}}\label{subsec:renormal_lim_2}
Before we directly compute the renormalized limit of the integral:

\begin{equation}
    \int_{\R^d}\tilde{C}_{\nu}(\xi,\epsilon)\tilde{K}_{1+\nu}(|\xi|\epsilon)\left (|\xi|^{2\nu}\hat{f}(\xi)\hat{g}(\xi)^* \right )\,d\xi,
\end{equation}
It will be useful to make some preliminary computations. 

Recalling \eqref{eqn:cnu_tilde}, we have that 
\begin{equation*}
\begin{split}
    &\int_{\R^d}\bar{C}_{\nu}(\xi,\epsilon)\tilde{K}_{1+\nu}(|\xi|\epsilon)\left (|\xi|^{2\nu}\hat{f}(\xi)\hat{g}(\xi)^* \right )\,d\xi\\
    &=\mathds{1}_{\nu \in \N_0}\frac{(-1)^\nu}{2^{\nu-1}\nu!}\log(\epsilon^{-1})\int_{\R^d}\tilde{K}_{1+\nu}(|\xi|\epsilon)\left (|\xi|^{2\nu}\hat{f}(\xi)\hat{g}(\xi)^* \right )\,d\xi\\
    &\qquad \qquad +\int_{\R^d}{C}_{\nu}(|\xi|\epsilon)\tilde{K}_{1+\nu}(|\xi|\epsilon)\left (|\xi|^{2\nu}\hat{f}(\xi)\hat{g}(\xi)^* \right )\,d\xi. 
\end{split}
\end{equation*}

We now compute the limit of the first integral:
\begin{equation*}
\mathds{1}_{\nu \in \N_0}\frac{(-1)^\nu}{2^{\nu-1}\nu!}\log(\epsilon^{-1})\int_{\R^d}\tilde{K}_{1+\nu}(|\xi|\epsilon)\left (|\xi|^{2\nu}\hat{f}(\xi)\hat{g}(\xi)^* \right )\,d\xi. 
\end{equation*}
We assume that $\nu \in \N_0$ without loss of generality. Observe that 
\begin{equation*}
\begin{split}
    &\log(\epsilon^{-1})\int_{\R^d}\tilde{K}_{1+\nu}(|\xi|\epsilon)\left (|\xi|^{2\nu}\hat{f}(\xi)\hat{g}(\xi)^* \right )\,d\xi\\
    &=\log(\epsilon^{-1})\int_{\R^d}\left (\tilde{K}_{1+\nu}(|\xi|\epsilon)-\tilde{K}_{1+\nu}(0)\right )\left (|\xi|^{2\nu}\hat{f}(\xi)\hat{g}(\xi)^* \right )\,d\xi\\
    &\qquad \qquad +\log(\epsilon^{-1})\tilde{K}_{1+\nu}(0)\int_{\R^d}\left (|\xi|^{2\nu}\hat{f}(\xi)\hat{g}(\xi)^* \right )\,d\xi\\
    &=\log(\epsilon^{-1})\int_{\R^d}\left (\tilde{K}_{1+\nu}(|\xi|\epsilon)-\tilde{K}_{1+\nu}(0)\right )\left (|\xi|^{2\nu}\hat{f}(\xi)\hat{g}(\xi)^* \right )\,d\xi+\text{singular terms}. 
\end{split}
\end{equation*}

Our attention now takes us to the second integral:
\begin{equation*}
    \int_{\R^d}{C}_{\nu}(|\xi|\epsilon)\tilde{K}_{1+\nu}(|\xi|\epsilon)\left (|\xi|^{2\nu}\hat{f}(\xi)\hat{g}(\xi)^* \right )\,d\xi. 
\end{equation*}
For simplicity, we will write 
\begin{equation*}
    {C}_{\nu}(z)=\mathds{1}_{\nu \ne 0}\sum_{j=0}^{\lfloor \nu \rfloor}c_{j}z^{-2(\nu-j)},\quad \text{for $z>0,$}
\end{equation*}
where the constants $c_{j}$ can be obtained from \eqref{eqn:Cnu}. 

In turn, we have that 
\begin{equation*}
\begin{split}
    &\int_{\R^d}{C}_{\nu}(|\xi|\epsilon)\tilde{K}_{1+\nu}(|\xi|\epsilon)\left (|\xi|^{2\nu}\hat{f}(\xi)\hat{g}(\xi)^* \right )\,d\xi\\
    &=\mathds{1}_{\nu \ne 0}\sum_{j=0}^{\lfloor \nu \rfloor}c_{j}\epsilon^{-2(\nu-j)}\int_{\R^d}|\xi|^{-2(\nu-j)}\tilde{K}_{1+\nu}(|\xi|\epsilon)\left (|\xi|^{2\nu}\hat{f}(\xi)\hat{g}(\xi)^* \right )\,d\xi,\\
    &=\mathds{1}_{\nu \ne 0}\sum_{j=0}^{\lfloor \nu \rfloor}c_{j}\epsilon^{-2(\nu-j)}\int_{\R^d}\tilde{K}_{1+\nu}(|\xi|\epsilon)\left (|\xi|^{2j}\hat{f}(\xi)\hat{g}(\xi)^* \right )\,d\xi,\\
    &=\mathds{1}_{\nu \ne 0}\sum_{j=0}^{\lfloor \nu \rfloor}c_{j}\epsilon^{-2\nu}\int_{\R^d}(|\xi|\epsilon )^{2j}\tilde{K}_{1+\nu}(|\xi|\epsilon)\left (\hat{f}(\xi)\hat{g}(\xi)^* \right )\,d\xi. 
\end{split}
\end{equation*}

From this, our focus will be on computing 
\begin{equation}\label{eqn:renormalized_limit_1}
    \rdash\lim_{\epsilon\rightarrow 0^{+}}\epsilon^{-2\nu}\int_{\R^d}(|\xi|\epsilon )^{2j}\tilde{K}_{1+\nu}(|\xi|\epsilon)\left (\hat{f}(\xi)\hat{g}(\xi)^* \right )\,d\xi,
\end{equation}
for $j=0,..,\lfloor \nu \rfloor$. To do this, we need to identify the singular terms that appear in $\epsilon^{-2\nu}(|\xi|\epsilon )^{2j}\tilde{K}_{1+\nu}(|\xi|\epsilon).$ We do this by separately considering the cases $\nu \in \R_{+}\setminus \N_0$ and $\nu \in \N_0.$

\begin{itemize}
\item $\nu \in \R_{+}\setminus \N_0$\ \\
We find an expression for $z^{2j}\tilde{K}_{1+\nu}(z)$, when $z=|\xi|\epsilon$.

From \eqref{eqn:series_laurent}, we have that 
	\begin{equation}
		\tilde{K}_{\nu}(z)=\frac{\pi}{2\sin(\pi\nu)}\sum_{k=0}^\infty \left ( \frac{z^{2k}}{2^{2k-\nu}k!\Gamma(k-\nu+1)} -\frac{z^{2(k+1+\nu)}}{2^{2k+\nu}k!\Gamma(k+\nu+1)}\right ), \quad \text{for $z>0$}. 
	\end{equation}
    In turn, we have that 
\begin{equation*}
\begin{split}
            &\epsilon^{-2\nu}(|\xi|\epsilon)^{2j}\tilde{K}_{\nu}(|\xi|\epsilon)\\
            &=\frac{\pi}{2\sin(\pi\nu)}\sum_{k=0}^\infty \epsilon^{2(k+j-\nu)}\left (\frac{|\xi|^{2(k+j)}}{2^{2k-\nu}k!\Gamma(k-\nu+1)}\right)\\
        &\qquad\qquad  -\frac{\pi}{2\sin(\pi\nu)}\sum_{k=0}^\infty
        \epsilon^{2(k+j+1)}\left (\frac{|\xi|^{2(k+j+1+\nu)}}{2^{2k+\nu}k!\Gamma(k+\nu+1)}\right ). 
\end{split}
\end{equation*}
        By identifying the singular terms, we have that 
        \begin{equation}\label{eqn:singular_terms_D}
        \begin{split}
        &\epsilon^{-2\nu}(|\xi|\epsilon)^{2j}\tilde{K}_{\nu}(|\xi|\epsilon)\\
        &=\frac{\pi}{2\sin(\pi\nu)}\sum_{k=0}^{\lfloor \nu\rfloor-j} \epsilon^{2(k+j-\nu)}\left (\frac{|\xi|^{2(k+j)}}{2^{2k-\nu}k!\Gamma(k-\nu+1)}\right)\\
        &\qquad +\frac{\pi}{2\sin(\pi\nu)}\sum_{k=\lfloor \nu\rfloor-j+1}^{\infty} \epsilon^{2(k+j-\nu)}\left (\frac{|\xi|^{2(k+j)}}{2^{2k-\nu}k!\Gamma(k-\nu+1)}\right)\\
        &\qquad \qquad  -\frac{\pi}{2\sin(\pi\nu)}\sum_{k=0}^\infty
        \epsilon^{2(k+j+1)}\left (\frac{|\xi|^{2(k+j+1+\nu)}}{2^{2k+\nu}k!\Gamma(k+\nu+1)}\right ),\\
        &=\frac{\pi}{2\sin(\pi\nu)}\sum_{k=0}^{\lfloor \nu\rfloor-j} \epsilon^{2(k+j-\nu)}\left (\frac{|\xi|^{2(k+j)}}{2^{2k-\nu}k!\Gamma(k-\nu+1)}\right)\\
        &\qquad +\frac{\pi}{2\sin(\pi\nu)}|\xi|^{2\nu}\sum_{k=\lfloor \nu\rfloor-j+1}^{\infty} \left (\frac{(|\xi|\epsilon)^{2(k+j-\nu)}}{2^{2k-\nu}k!\Gamma(k-\nu+1)}\right)\\
        &\qquad \qquad  -\frac{\pi}{2\sin(\pi\nu)}|\xi|^{2\nu}\sum_{k=0}^\infty
        \left (\frac{(|\xi|\epsilon)^{2(k+j+1)}}{2^{2k+\nu}k!\Gamma(k+\nu+1)}\right ),\\
        \end{split}
    \end{equation}
Hence, the singular terms are 
\begin{equation*}
    D_{\nu}(\xi,\epsilon):=\frac{\pi}{2\sin(\pi\nu)}\sum_{k=0}^{\lfloor \nu\rfloor-j} \epsilon^{2(k+j-\nu)}\left (\frac{|\xi|^{2(k+j)}}{2^{2k-\nu}k!\Gamma(k-\nu+1)}\right). 
\end{equation*}

In order to compute \eqref{eqn:renormalized_limit_1}, we consider the following:
\begin{equation*}
\begin{split}
    &\epsilon^{-2\nu}\int_{\R^d}(|\xi|\epsilon )^{2j}\tilde{K}_{1+\nu}(|\xi|\epsilon)\left (\hat{f}(\xi)\hat{g}(\xi)^* \right )\,d\xi\\
    &=\int_{\R^d}\left (\epsilon^{-2\nu}(|\xi|\epsilon )^{2j}\tilde{K}_{1+\nu}(|\xi|\epsilon) -D_{\nu}(\xi,\epsilon)\right )\left (\hat{f}(\xi)\hat{g}(\xi)^* \right )\,d\xi\\
    &\qquad +\int_{\R^d}D_{\nu}(\xi,\epsilon)\left (\hat{f}(\xi)\hat{g}(\xi)^* \right )\,d\xi,\\
    &=\int_{\R^d}\left (\epsilon^{-2\nu}(|\xi|\epsilon )^{2j}\tilde{K}_{1+\nu}(|\xi|\epsilon) -D_{\nu}(\xi,\epsilon)\right )\left (\hat{f}(\xi)\hat{g}(\xi)^* \right )\,d\xi\\
    &\qquad +\int_{\R^d}D_{\nu}(\xi,\epsilon)\left (\hat{f}(\xi)\hat{g}(\xi)^* \right )\,d\xi,\\
    &=\int_{\R^d}\left (\epsilon^{-2\nu}(|\xi|\epsilon )^{2j}\tilde{K}_{1+\nu}(|\xi|\epsilon) -D_{\nu}(\xi,\epsilon)\right )\left (\hat{f}(\xi)\hat{g}(\xi)^* \right )\,d\xi+\text{singular terms}.
\end{split}
\end{equation*}
Hence, to obtain the value of \eqref{eqn:renormalized_limit_1}, we need to compute 
\begin{equation}
    \lim_{\epsilon\rightarrow 0^{+}}\int_{\R^d}\left (\epsilon^{-2\nu}(|\xi|\epsilon )^{2j}\tilde{K}_{1+\nu}(|\xi|\epsilon) -D_{\nu}(\xi,\epsilon)\right )\left (\hat{f}(\xi)\hat{g}(\xi)^* \right )\,d\xi. 
\end{equation}
We proceed via an application of the dominated convergence theorem. We now establish bounds of the integrand that are independent of $\epsilon\in(0,1).$ We consider the cases $|\xi|\epsilon\leq 1$ and $|\xi|\epsilon \geq 1$ separately. 

\begin{itemize}
    \item We focus on establishing a uniform bound for the case $|\xi|\epsilon\leq 1$.

From \eqref{eqn:singular_terms_D} and the assumption that $|\xi|\epsilon\leq 1$, we have that 
\begin{equation*}
\begin{split}
    &\left |\left (\epsilon^{-2\nu}(|\xi|\epsilon )^{2j}\tilde{K}_{1+\nu}(|\xi|\epsilon) -D_{\nu}(\xi,\epsilon)\right )\left (\hat{f}(\xi)\hat{g}(\xi)^* \right ) \right |\\
    &\lesssim \left ||\xi|^{2\nu}\left (\hat{f}(\xi)\hat{g}(\xi)^* \right ) \right |\sum_{k=\lfloor \nu\rfloor-j+1}^{\infty} \left (\frac{(|\xi|\epsilon)^{2(k+j-\nu)}}{2^{2k-\nu}k!\Gamma(k-\nu+1)}\right)\\
        &\qquad \qquad  +\left ||\xi|^{2\nu}\left (\hat{f}(\xi)\hat{g}(\xi)^* \right ) \right |\sum_{k=0}^\infty
        \left (\frac{(|\xi|\epsilon)^{2(k+j+1)}}{2^{2k+\nu}k!\Gamma(k+\nu+1)}\right ),\\
    &\lesssim \left ||\xi|^{2\nu}\left (\hat{f}(\xi)\hat{g}(\xi)^* \right ) \right |\sum_{k=\lfloor \nu\rfloor-j+1}^{\infty} \left (\frac{1}{2^{2k-\nu}k!\Gamma(k-\nu+1)}\right)\\
        &\qquad \qquad  +\left ||\xi|^{2\nu}\left (\hat{f}(\xi)\hat{g}(\xi)^* \right ) \right |\sum_{k=0}^\infty
        \left (\frac{1}{2^{2k+\nu}k!\Gamma(k+\nu+1)}\right ).
\end{split}
\end{equation*}
In turn, we have that 
\begin{equation}
    \left |\left (\epsilon^{-2\nu}(|\xi|\epsilon )^{2j}\tilde{K}_{1+\nu}(|\xi|\epsilon) -D_{\nu}(\xi,\epsilon)\right )\left (\hat{f}(\xi)\hat{g}(\xi)^* \right ) \right |\mathds{1}_{|\xi|\leq \epsilon^{-1}}\lesssim \left ||\xi|^{2\nu}\left (\hat{f}(\xi)\hat{g}(\xi)^* \right ) \right |,
\end{equation}
which is integrable and independent of $\epsilon \in (0,1).$
    \item We focus our attention on the case $|\xi|\epsilon \geq 1. $
        Note that, since $\tilde{K}_{1+\nu}$ decays at an exponential rate, we have that 
        \begin{equation*}
            |(|\xi|\epsilon )^{2j}\tilde{K}_{1+\nu}(|\xi|\epsilon)|\lesssim 1. 
        \end{equation*}

        Moreover, since $\epsilon^{-1}\leq |\xi|$, we have that 
        \begin{equation*}
    |D_{\nu}(\xi,\epsilon)|\lesssim \sum_{k=0}^{\lfloor \nu\rfloor-j} |\xi|^{4(k+j)-2\nu}. 
\end{equation*}
Hence, we have that 
\begin{equation*}
    \begin{split}
        &\left |\left (\epsilon^{-2\nu}(|\xi|\epsilon )^{2j}\tilde{K}_{1+\nu}(|\xi|\epsilon) -D_{\nu}(\xi,\epsilon)\right )\left (\hat{f}(\xi)\hat{g}(\xi)^* \right ) \right |\mathds{1}_{\epsilon^{-1}\leq |\xi|}\\
    &\lesssim \sum_{k=0}^{\lfloor \nu\rfloor-j}\left ||\xi|^{4(k+j)}\left (\hat{f}(\xi)\hat{g}(\xi)^* \right ) \right |\mathds{1}_{1\leq |\xi|}.
    \end{split}
\end{equation*}
\end{itemize}
The previous 2 points show that we can apply the dominated convergence theorem. In turn, this gives us that 
\begin{equation}
    \lim_{\epsilon\rightarrow 0^{+}}\int_{\R^d}\left (\epsilon^{-2\nu}(|\xi|\epsilon )^{2j}\tilde{K}_{1+\nu}(|\xi|\epsilon) -D_{\nu}(\xi,\epsilon)\right )\left (\hat{f}(\xi)\hat{g}(\xi)^* \right )\,d\xi=0, 
\end{equation}
for $j=0,..,\lfloor \nu \rfloor$. Hence, we have proved the result for the case when $\nu \in \R\setminus \N_0.$

\item $\nu \in \N_0$ \ \\
 Like in the previous case, we want to find an expression for $\epsilon^{-2\nu}(|\xi|\epsilon )^{2j}\tilde{K}_{1+\nu}(|\xi|\epsilon),$ for when $\nu \in \N_0$. Using a characterization of ${K}_{1+\nu}$ that appears in \cite[Section 3.2]{derezi2023generalized} we have the following expression 
\begin{equation*}
    \begin{split}
        z^{2j}\tilde{K}_{1+\nu}(z)&=\sum_{k=0}^{\nu}\frac{(-1)^k(\nu-k)!}{2^{2k-\nu}k!}z^{2(k+j)}\\
    &+(-1)^\nu\sum_{k=0}^\infty \left ( \frac{\psi(k+1)+\psi(k+\nu+2)+2\log(2)-2\log(z)}{2^{2k+\nu}k!(k+\nu)!}\right )z^{2(k+j+1+\nu)},
    \end{split}
\end{equation*}
for $z>0.$ In turn, we have that 
\begin{equation}\label{eqn:integer_sum}
\begin{split}
    &\epsilon^{-2\nu}(|\xi|\epsilon )^{2j}\tilde{K}_{1+\nu}(|\xi|\epsilon)\\
    &=\mathds{1}_{\nu\ne j}\sum_{k=0}^{\nu -j-1}\frac{(-1)^k(\nu-k)!}{2^{2k-\nu}k!}\epsilon^{-\left (\nu-j-k\right )}|\xi|^{2(j+k)}\\
    &+|\xi|^{2\nu}\sum_{k=\nu -j-1}^{\nu}\frac{(-1)^k(\nu-k)!}{2^{2k-\nu}k!}(\epsilon|\xi|)^{-\left (\nu-j-k\right )}\\
    &+(-1)^\nu|\xi|^{2\nu}\sum_{k=0}^\infty E(\xi,\epsilon,k)(\epsilon|\xi|)^{2(k+j+1)}\\
    &+(-1)^{\nu+1}|\xi|^{2\nu}\sum_{k=0}^\infty \frac{1}{2^{2k+\nu-1}k!(k+\nu)!}\log(\epsilon|\xi|)(\epsilon|\xi|)^{2(k+j+1)},
\end{split}
\end{equation}
where 
\begin{equation*}
    E(\xi,\epsilon,k):=\frac{\psi(k+1)+\psi(k+\nu+2)+2\log(2)}{2^{2k+\nu}k!(k+\nu)!}. 
\end{equation*}

Hence, the singular terms are 
\begin{equation*}
    D_{\nu}(\xi,\epsilon):=\mathds{1}_{\nu\ne j}\sum_{k=0}^{\nu -j-1}\frac{(-1)^k(\nu-k)!}{2^{2k-\nu}k!}\epsilon^{-\left (\nu-j-k\right )}|\xi|^{2(j+k)}. 
\end{equation*}
Again, like in the case when $\nu \in \R_{+}\setminus\N_0$, we have that 
\begin{equation*}
\begin{split}
    &\epsilon^{-2\nu}\int_{\R^d}(|\xi|\epsilon )^{2j}\tilde{K}_{1+\nu}(|\xi|\epsilon)\left (\hat{f}(\xi)\hat{g}(\xi)^* \right )\,d\xi\\
    &=\int_{\R^d}\left (\epsilon^{-2\nu}(|\xi|\epsilon )^{2j}\tilde{K}_{1+\nu}(|\xi|\epsilon) -D_{\nu}(\xi,\epsilon)\right )\left (\hat{f}(\xi)\hat{g}(\xi)^* \right )\,d\xi+\text{singular terms}.
\end{split}
\end{equation*}
Hence, in order to compute the renormalized limit, we need to compute 
\begin{equation*}
    \lim_{\epsilon\rightarrow 0^{+}}\int_{\R^d}\left (\epsilon^{-2\nu}(|\xi|\epsilon )^{2j}\tilde{K}_{1+\nu}(|\xi|\epsilon) -D_{\nu}(\xi,\epsilon)\right )\left (\hat{f}(\xi)\hat{g}(\xi)^* \right )\,d\xi.
\end{equation*}
Like in the case when $\nu \in \R_{+}\setminus \N_0$ this can be done via an application of the dominated convergence theorem. The same arguments lead to the 
\begin{equation*}
    \lim_{\epsilon\rightarrow 0^{+}}\int_{\R^d}\left (\epsilon^{-2\nu}(|\xi|\epsilon )^{2j}\tilde{K}_{1+\nu}(|\xi|\epsilon) -D_{\nu}(\xi,\epsilon)\right )\left (\hat{f}(\xi)\hat{g}(\xi)^* \right )\,d\xi=0.
\end{equation*}

\end{itemize}
Overall, we have that 
\begin{equation}
    \rdash\lim_{\epsilon\rightarrow 0^{+}}\epsilon^{-2\nu}\int_{\R^d}(|\xi|\epsilon )^{2j}\tilde{K}_{1+\nu}(|\xi|\epsilon)\left (\hat{f}(\xi)\hat{g}(\xi)^* \right )\,d\xi=0,
\end{equation}
which finishes our proof. 

\section*{Acknowledgements}

The author would like to thank Antoine Gloria, Félix del Teso, Huyuan Chen, Tobias Weth, Daniel Hauer for useful discussions in preparation of this note. Moreover, the author is especially grateful to Alberto Bonicelli and Lorenzo Zambotti for their insights and helping me navigate the mathematics behind Quantum Field Theory.

Moreover, the author acknowledges financial support from
the European Research Council (ERC) under the European Union’s Horizon 2020 research
and innovation programme (Grant Agreement n◦864066). Views and opinions expressed
are however those of the author only and do not necessarily reflect those of the European
Union or the European Research Council Executive Agency. Neither the European Union
nor the granting authority can be held responsible for them.

\bibliographystyle{unsrt}
\bibliography{\jobname}

\begin{thebibliography}{10}

\bibitem{MR2354493}
Luis Caffarelli and Luis Silvestre.
\newblock An extension problem related to the fractional {L}aplacian.
\newblock {\em Comm. Partial Differential Equations}, 32(7-9):1245--1260, 2007.

\bibitem{MR3859452}
Mateusz Kwa\'{s}nicki and Jacek Mucha.
\newblock Extension technique for complete {B}ernstein functions of the
  {L}aplace operator.
\newblock {\em J. Evol. Equ.}, 18(3):1341--1379, 2018.

\bibitem{MR4262340}
Sigurd Assing and John Herman.
\newblock Extension technique for functions of diffusion operators: a
  stochastic approach.
\newblock {\em Electron. J. Probab.}, 26:Paper No. 67, 32, 2021.

\bibitem{chen2023extensionproblemlogarithmiclaplacian}
Huyuan Chen, Daniel Hauer, and Tobias Weth.
\newblock An extension problem for the logarithmic laplacian, 2023.

\bibitem{MR3995092}
Huyuan Chen and Tobias Weth.
\newblock The {D}irichlet problem for the logarithmic {L}aplacian.
\newblock {\em Comm. Partial Differential Equations}, 44(11):1100--1139, 2019.

\bibitem{MR209834}
Laurent Schwartz.
\newblock {\em Th\'{e}orie des distributions}.
\newblock Publications de l'Institut de Math\'{e}matique de l'Universit\'{e} de
  Strasbourg, IX-X. Hermann, Paris, 1966.
\newblock Nouvelle \'{e}dition, enti\'{e}rement corrig\'{e}e, refondue et
  augment\'{e}e.

\bibitem{MR1918790}
Stefan~G. Samko.
\newblock {\em Hypersingular integrals and their applications}, volume~5 of
  {\em Analytical Methods and Special Functions}.
\newblock Taylor \& Francis Group, London, 2002.

\bibitem{MR1388887}
Horst Alzer.
\newblock On some inequalities for the gamma and psi functions.
\newblock {\em Math. Comp.}, 66(217):373--389, 1997.

\bibitem{MR1411441}
David~R. Adams and Lars~Inge Hedberg.
\newblock {\em Function spaces and potential theory}, volume 314 of {\em
  Grundlehren der mathematischen Wissenschaften [Fundamental Principles of
  Mathematical Sciences]}.
\newblock Springer-Verlag, Berlin, 1996.

\bibitem{MR30102}
Marcel Riesz.
\newblock L'int\'{e}grale de {R}iemann-{L}iouville et le probl\`eme de
  {C}auchy.
\newblock {\em Acta Math.}, 81:1--223, 1949.

\bibitem{MR147616}
Marcel Riesz.
\newblock The analytic continuation of the {R}iemann-{L}iouville integral in
  the hyperbolic case.
\newblock {\em Canadian J. Math.}, 13:37--47, 1961.

\bibitem{MR2987296}
Sylvie Paycha.
\newblock {\em Regularised integrals, sums and traces}, volume~59 of {\em
  University Lecture Series}.
\newblock American Mathematical Society, Providence, RI, 2012.
\newblock An analytic point of view.

\bibitem{derezi2023generalized}
Jan Dereziński, Christian Gaß, and Błażej Ruba.
\newblock Generalized integrals of macdonald and gegenbauer functions, 2023.

\bibitem{MR2737789}
Sun-Yung~Alice Chang and Mar\'{\i}a del~Mar Gonz\'{a}lez.
\newblock Fractional {L}aplacian in conformal geometry.
\newblock {\em Adv. Math.}, 226(2):1410--1432, 2011.

\bibitem{MR3592161}
Sun Yung~Alice Chang and Ray~A. Yang.
\newblock On a class of non-local operators in conformal geometry.
\newblock {\em Chinese Ann. Math. Ser. B}, 38(1):215--234, 2017.

\bibitem{MR4742773}
Animesh Biswas and Pablo~Ra\'{u}l Stinga.
\newblock Sharp extension problem characterizations for higher fractional power
  operators in {B}anach spaces.
\newblock {\em J. Funct. Anal.}, 287(3):Paper No. 110474, 27, 2024.

\bibitem{MR4429579}
Gabriele Cora and Roberta Musina.
\newblock The {$s$}-polyharmonic extension problem and higher-order fractional
  {L}aplacians.
\newblock {\em J. Funct. Anal.}, 283(5):Paper No. 109555, 33, 2022.

\bibitem{MR4735195}
Roberta Musina and Alexander~I. Nazarov.
\newblock Fractional operators as traces of operator-valued curves.
\newblock {\em J. Funct. Anal.}, 287(2):Paper No. 110443, 33, 2024.

\bibitem{yang2013higherorderextensionsfractional}
Ray Yang.
\newblock On higher order extensions for the fractional laplacian, 2013.

\bibitem{MR3274562}
M.~Hairer.
\newblock A theory of regularity structures.
\newblock {\em Invent. Math.}, 198(2):269--504, 2014.

\bibitem{serfaty2024lecturescoulombrieszgases}
Sylvia Serfaty.
\newblock Lectures on coulomb and riesz gases, 2024.

\bibitem{MR3309890}
Sylvia Serfaty.
\newblock {\em Coulomb gases and {G}inzburg-{L}andau vortices}.
\newblock Zurich Lectures in Advanced Mathematics. European Mathematical
  Society (EMS), Z\"{u}rich, 2015.

\bibitem{MR3618899}
Fabrice Bethuel, Ha\"{\i}m Brezis, and Fr\'{e}d\'{e}ric H\'{e}lein.
\newblock {\em Ginzburg-{L}andau vortices}.
\newblock Modern Birkh\"{a}user Classics. Birkh\"{a}user/Springer, Cham, 2017.
\newblock Reprint of the 1994 edition [ MR1269538].

\bibitem{MR36370}
Otto Frostman.
\newblock Potentiel de masses \`a somme alg\'{e}brique nulle.
\newblock {\em Kungl. Fysiografiska S\"{a}llskapets i Lund F\"{o}rhandlingar
  [Proc. Roy. Physiog. Soc. Lund]}, 20(1):1--21, 1950.

\bibitem{MR3353821}
Etienne Sandier and Sylvia Serfaty.
\newblock 2{D} {C}oulomb gases and the renormalized energy.
\newblock {\em Ann. Probab.}, 43(4):2026--2083, 2015.

\bibitem{MR3646281}
Mircea Petrache and Sylvia Serfaty.
\newblock Next order asymptotics and renormalized energy for {R}iesz
  interactions.
\newblock {\em J. Inst. Math. Jussieu}, 16(3):501--569, 2017.

\bibitem{MR4099791}
Mircea Petrache and Sylvia Serfaty.
\newblock Crystallization for {C}oulomb and {R}iesz interactions as a
  consequence of the {C}ohn-{K}umar conjecture.
\newblock {\em Proc. Amer. Math. Soc.}, 148(7):3047--3057, 2020.

\bibitem{MR1912205}
Andrei~N. Borodin and Paavo Salminen.
\newblock {\em Handbook of {B}rownian motion---facts and formulae}.
\newblock Probability and its Applications. Birkh\"{a}user Verlag, Basel,
  second edition, 2002.

\bibitem{MR374877}
S.~M. Nikol'skii.
\newblock {\em Approximation of functions of several variables and imbedding
  theorems}.
\newblock Die Grundlehren der mathematischen Wissenschaften, Band 205.
  Springer-Verlag, New York-Heidelberg, 1975.
\newblock Translated from the Russian by John M. Danskin, Jr.

\bibitem{MR1503439}
I.~J. Schoenberg.
\newblock Metric spaces and completely monotone functions.
\newblock {\em Ann. of Math. (2)}, 39(4):811--841, 1938.

\end{thebibliography}


\begin{thebibliography}{9}
\bibitem{MR3966749}Serfaty, S. Systems of points with Coulomb interactions. {\em Proceedings Of The International Congress Of Mathematicians—Rio De Janeiro 2018. Vol. I. Plenary Lectures}. pp. 935-977 (2018)
\bibitem{MR3274562}Hairer, M. A theory of regularity structures. {\em Invent. Math.}. \textbf{198}, 269-504 (2014), \url{https://doi.org/10.1007/s00222-014-0505-4}
\bibitem{MR3618899}Bethuel, F., Brezis, H. \& Hélein, F. Ginzburg-Landau vortices. (Birkhäuser/Springer, Cham,2017), Reprint of the 1994 edition [ MR1269538]
\bibitem{MR3353821}Sandier, E. \& Serfaty, S. 2D Coulomb gases and the renormalized energy. {\em Ann. Probab.}. \textbf{43}, 2026-2083 (2015), \url{https://doi.org/10.1214/14-AOP927}
\bibitem{MR2354493}Caffarelli, L. \& Silvestre, L. An extension problem related to the fractional Laplacian. {\em Comm. Partial Differential Equations}. \textbf{32}, 1245-1260 (2007), \url{https://doi.org/10.1080/03605300600987306}
\bibitem{MR4126301}Roncal, L. \& Thangavelu, S. An extension problem and trace Hardy inequality for the sub-Laplacian on H-type groups. {\em Int. Math. Res. Not. IMRN}., 4238-4294 (2020), \url{https://doi.org/10.1093/imrn/rny137}
\bibitem{MR2737789}Chang, S. \& González, M. Fractional Laplacian in conformal geometry. {\em Adv. Math.}. \textbf{226}, 1410-1432 (2011), \url{https://doi.org/10.1016/j.aim.2010.07.016}
\bibitem{MR3592161}Chang, S. \& Yang, R. On a class of non-local operators in conformal geometry. {\em Chinese Ann. Math. Ser. B}. \textbf{38}, 215-234 (2017), \url{https://doi.org/10.1007/s11401-016-1068-z}
\bibitem{MR4095805}Case, J. Sharp weighted Sobolev trace inequalities and fractional powers of the Laplacian. {\em J. Funct. Anal.}. \textbf{279}, 108567, 33 (2020), \url{https://doi.org/10.1016/j.jfa.2020.108567}
\bibitem{chen2023extensionproblemlogarithmiclaplacian}Chen, H., Hauer, D. \& Weth, T. An extension problem for the logarithmic Laplacian.  (2023), \url{https://arxiv.org/abs/2312.15689}
\bibitem{MR4735195}Musina, R. \& Nazarov, A. Fractional operators as traces of operator-valued curves. {\em J. Funct. Anal.}. \textbf{287}, Paper No. 110443, 33 (2024), \url{https://doi.org/10.1016/j.jfa.2024.110443}
\bibitem{MR3995092}Chen, H. \& Weth, T. The Dirichlet problem for the logarithmic Laplacian. {\em Comm. Partial Differential Equations}. \textbf{44}, 1100-1139 (2019), \url{https://doi.org/10.1080/03605302.2019.1611851}
\bibitem{yang2013higherorderextensionsfractional}Yang, R. On higher order extensions for the fractional Laplacian.  (2013), \url{https://arxiv.org/abs/1302.4413}
\bibitem{MR4429579}Cora, G. \& Musina, R. The s-polyharmonic extension problem and higher-order fractional Laplacians. {\em J. Funct. Anal.}. \textbf{283}, Paper No. 109555, 33 (2022), https://doi.org/10.1016/j.jfa.2022.109555
\bibitem{MR4303657}Teso, F., Gómez-Castro, D. \& Vázquez, J. Three representations of the fractional p-Laplacian: semigroup, extension and Balakrishnan formulas. {\em Fract. Calc. Appl. Anal.}. \textbf{24}, 966-1002 (2021), \url{https://doi.org/10.1515/fca-2021-0042}
\bibitem{MR4309152}Ferrari, F. Some extension results for nonlocal operators and applications. {\em Nonlocal And Fractional Operators}. \textbf{26} pp. 155-187, \url{https://doi.org/10.1007/978-3-030-69236-0_9}

\bibitem{MR4390221}Balhara, R., Boggarapu, P. \& Thangavelu, S. An extension problem and Hardy type inequalities for the Grushin operator. {\em Geometric Aspects Of Harmonic Analysis}. \textbf{45} pp. 1-28, \url{https://doi.org/10.1007/978-3-030-72058-2_1}
\bibitem{MR3286532}Frank, R., González, M., Monticelli, D. \& Tan, J. An extension problem for the CR fractional Laplacian. {\em Adv. Math.}. \textbf{270} pp. 97-137 (2015), \url{https://doi.org/10.1016/j.aim.2014.09.026}
\bibitem{MR4340479}Garofalo, N. \& Tralli, G. A class of nonlocal hypoelliptic operators and their extensions. {\em Indiana Univ. Math. J.}. \textbf{70}, 1717-1744 (2021), \url{https://doi.org/10.1512/iumj.2021.70.8635}
\bibitem{MR2964192}Forbes, C., Evans, M., Hastings, N. \& Peacock, B. Statistical distributions. (John Wiley \& Sons, Inc., Hoboken, NJ,2011), Fourth edition [of MR359137; 2nd and 3rd editions: MR1228642; MR1784302]
\bibitem{MR4742773}Biswas, A. \& Stinga, P. Sharp extension problem characterizations for higher fractional power operators in Banach spaces. {\em J. Funct. Anal.}. \textbf{287}, Paper No. 110474, 27 (2024), \url{https://doi.org/10.1016/j.jfa.2024.110474}
\bibitem{MR262814}Lizorkin, P. Generalized Liouville differentiation and the multiplier method in the theory of imbeddings of classes of differentiable functions. {\em Trudy Mat. Inst. Steklov.}. \textbf{105} pp. 89-167 (1969)
\bibitem{MR4165680}Monguzzi, A., Peloso, M. \& Salvatori, M. Fractional Laplacian, homogeneous Sobolev spaces and their realizations. {\em Ann. Mat. Pura Appl. (4)}. \textbf{199}, 2243-2261 (2020), \url{https://doi.org/10.1007/s10231-020-00966-7}
\bibitem{MR3988080}Dipierro, S., Savin, O. \& Valdinoci, E. Definition of fractional Laplacian for functions with polynomial growth. {\em Rev. Mat. Iberoam.}. \textbf{35}, 1079-1122 (2019), \url{https://doi.org/10.4171/rmi/1079}
\bibitem{MR4437273}Dipierro, S., Dzhugan, A. \& Valdinoci, E. Integral operators defined “up to a polynomial”. {\em Fract. Calc. Appl. Anal.}. \textbf{25}, 60-108 (2022), \url{https://doi.org/10.1007/s13540-021-00005-z}
\bibitem{MR3060183}Lannes, D. The water waves problem. (American Mathematical Society, Providence, RI,2013), \url{https://doi.org/10.1090/surv/188}, Mathematical analysis and asymptotics
\bibitem{MR3381003}Uhlmann, G. 30 years of Calderón's problem. {\em Séminaire Laurent Schwartz—Équations Aux Dérivées Partielles Et Applications. Année 2012–2013}. pp. Exp. No. XIII, 25 (2014)
\bibitem{MR51411}Hadamard, J. Lectures on Cauchy's problem in linear partial differential equations. (Dover Publications, New York,1953)
\bibitem{MR1505102}Riesz, M. L'intégrale de Riemann-Liouville et le problème de Cauchy pour l'équation des ondes. {\em Bull. Soc. Math. France}. \textbf{67} pp. 153-170 (1939), \url{http://www.numdam.org/item?id=BSMF_1939__67__S153_0}
\bibitem{MR1555043}Hadamard, J. Théorie des équations aux dérivées partielles linéaires hyperboliques et du problème de Cauchy. {\em Acta Math.}. \textbf{31}, 333-380 (1908), \url{https://doi.org/10.1007/BF02415449}
\bibitem{MR3709888}Stinga, P. \& Torrea, J. Regularity theory and extension problem for fractional nonlocal parabolic equations and the master equation. {\em SIAM J. Math. Anal.}. \textbf{49}, 3893-3924 (2017), \url{https://doi.org/10.1137/16M1104317}
\bibitem{MR4262340}Assing, S. \& Herman, J. Extension technique for functions of diffusion operators: a stochastic approach. {\em Electron. J. Probab.}. \textbf{26} pp. Paper No. 67, 32 (2021), \url{https://doi.org/10.1214/21-ejp624}
\bibitem{MR4473118}Kwaśnicki, M. Harmonic extension technique for non-symmetric operators with completely monotone kernels. {\em Calc. Var. Partial Differential Equations}. \textbf{61}, Paper No. 202, 40 (2022), \url{https://doi.org/10.1007/s00526-022-02308-2}
\bibitem{MR3859452}Kwaśnicki, M. \& Mucha, J. Extension technique for complete Bernstein functions of the Laplace operator. {\em J. Evol. Equ.}. \textbf{18}, 1341-1379 (2018), \url{https://doi.org/10.1007/s00028-018-0444-4}
\bibitem{MR4151098}Meichsner, J. \& Seifert, C. On the harmonic extension approach to fractional powers in Banach spaces. {\em Fract. Calc. Appl. Anal.}. \textbf{23}, 1054-1089 (2020), \url{https://doi.org/10.1515/fca-2020-0055}
\bibitem{MR3056307}Galé, J., Miana, P. \& Stinga, P. Extension problem and fractional operators: semigroups and wave equations. {\em J. Evol. Equ.}. \textbf{13}, 343-368 (2013), \url{https://doi.org/10.1007/s00028-013-0182-6}
\bibitem{MR115096}Balakrishnan, A. Fractional powers of closed operators and the semigroups generated by them. {\em Pacific J. Math.}. \textbf{10} pp. 419-437 (1960), \url{http://projecteuclid.org/euclid.pjm/1103038401}
\bibitem{MR2754080}Stinga, P. \& Torrea, J. Extension problem and Harnack's inequality for some fractional operators. {\em Comm. Partial Differential Equations}. \textbf{35}, 2092-2122 (2010), \url{https://doi.org/10.1080/03605301003735680}
\bibitem{MR1918790}Samko, S. Hypersingular integrals and their applications. (Taylor \& Francis Group, London,2002)
\bibitem{MR317123}Brézis, H. Équations d'évolution du second ordre associées à des opérateurs monotones. {\em Israel J. Math.}. \textbf{12} pp. 51-60 (1972), \url{https://doi.org/10.1007/BF02764814}
\bibitem{capogna2022neumannproblemspharmonicfunctions}Capogna, L., Kline, J., Korte, R., Shanmugalingam, N. \& Snipes, M. Neumann problems for p-harmonic functions, and induced nonlocal operators in metric measure spaces.  (2022), \url{https://arxiv.org/abs/2204.00571}
\bibitem{MR4026441}Hauer, D., He, Y. \& Liu, D. Fractional powers of monotone operators in Hilbert spaces. {\em Adv. Nonlinear Stud.}. \textbf{19}, 717-755 (2019), \url{https://doi.org/10.1515/ans-2019-2053}
\bibitem{MR331133}Barbu, V. A class of boundary problems for second order abstract differential equations. {\em J. Fac. Sci. Univ. Tokyo Sect. IA Math.}. \textbf{19} pp. 295-319 (1972)
\bibitem{derezi2023generalized}Dereziński, J., Gaß, C. \& Ruba, B. Generalized integrals of Macdonald and Gegenbauer functions.  (2023)
\bibitem{MR1347689}Samko, S., Kilbas, A. \& Marichev, O. Fractional integrals and derivatives. (Gordon,1993)
\bibitem{MR887102}Glimm, J. \& Jaffe, A. Quantum physics. (Springer-Verlag, New York,1987), \url{https://doi.org/10.1007/978-1-4612-4728-9}, A functional integral point of view
\bibitem{MR36370}Frostman, O. Potentiel de masses à somme algébrique nulle. {\em Kungl. Fysiografiska Sällskapets I Lund Förhandlingar [Proc. Roy. Physiog. Soc. Lund]}. \textbf{20}, 1-21 (1950)
\bibitem{MR2257398}Cohn, H. \& Kumar, A. Universally optimal distribution of points on spheres. {\em J. Amer. Math. Soc.}. \textbf{20}, 99-148 (2007), \url{https://doi.org/10.1090/S0894-0347-06-00546-7}
\bibitem{MR1963816}Korenev, B. Bessel functions and their applications. (Taylor \& Francis Group, London,2002), Translated from the Russian by E. V. Pankratiev
\bibitem{MR1912205}Borodin, A. \& Salminen, P. Handbook of Brownian motion—facts and formulae. (Birkhäuser Verlag, Basel,2002), \url{https://doi.org/10.1007/978-3-0348-8163-0}
\bibitem{MR30102}Riesz, M. L'intégrale de Riemann-Liouville et le problème de Cauchy. {\em Acta Math.}. \textbf{81} pp. 1-223 (1949), \url{https://doi.org/10.1007/BF02395016}
\bibitem{MR1503439}Schoenberg, I. Metric spaces and completely monotone functions. {\em Ann. Of Math. (2)}. \textbf{39}, 811-841 (1938), \url{https://doi.org/10.2307/1968466}
\bibitem{grosse2013solvable}Grosse, H. \& Wulkenhaar, R. Solvable limits of a 4D noncommutative QFT.  (2013)
\bibitem{MR747302}Berg, C., Christensen, J. \& Ressel, P. Harmonic analysis on semigroups. (Springer-Verlag, New York,1984), \url{https://doi.org/10.1007/978-1-4612-1128-0}, Theory of positive definite and related functions
\bibitem{MR2978140}Schilling, R., Song, R. \& Vondraček, Z. Bernstein functions. (Walter de Gruyter \& Co., Berlin,2012), \url{https://doi.org/10.1515/9783110269338}, Theory and applications
\bibitem{MR488218}Barndorff-Nielsen, O., Blaesild, P. \& Halgreen, C. First hitting time models for the generalized inverse Gaussian distribution. {\em Stochastic Process. Appl.}. \textbf{7}, 49-54 (1978), \url{https://doi.org/10.1016/0304-4149(78)90036-4}
\bibitem{MR3646281}Petrache, M. \& Serfaty, S. Next order asymptotics and renormalized energy for Riesz interactions. {\em J. Inst. Math. Jussieu}. \textbf{16}, 501-569 (2017), \url{https://doi.org/10.1017/S1474748015000201}
\bibitem{MR4099791}Petrache, M. \& Serfaty, S. Crystallization for Coulomb and Riesz interactions as a consequence of the Cohn-Kumar conjecture. {\em Proc. Amer. Math. Soc.}. \textbf{148}, 3047-3057 (2020), \url{https://doi.org/10.1090/proc/15003}
\bibitem{MR2244037}Haase, M. The functional calculus for sectorial operators. (Birkhäuser Verlag, Basel,2006), \url{https://doi.org/10.1007/3-7643-7698-8}
\bibitem{MR4502595}Cohn, H., Kumar, A., Miller, S., Radchenko, D. \& Viazovska, M. Universal optimality of the $E_8$ and Leech lattices and interpolation formulas. {\em Ann. Of Math. (2)}. \textbf{196}, 983-1082 (2022), \url{https://doi.org/10.4007/annals.2022.196.3.3}
\bibitem{MR2987296}Paycha, S. Regularised integrals, sums and traces. (American Mathematical Society, Providence, RI,2012), \url{https://doi.org/10.1090/ulect/059}, An analytic point of view
\bibitem{MR1411441}Adams, D. \& Hedberg, L. Function spaces and potential theory. (Springer-Verlag, Berlin,1996), \url{https://doi.org/10.1007/978-3-662-03282-4}
\bibitem{MR2830563}Ng, K., Tian, G. \& Tang, M. Dirichlet and related distributions. (John Wiley \& Sons, Ltd., Chichester,2011), \url{https://doi.org/10.1002/9781119995784}, Theory, methods and applications
\bibitem{MR3772192}Arendt, W., Elst, A. \& Warma, M. Fractional powers of sectorial operators via the Dirichlet-to-Neumann operator. {\em Comm. Partial Differential Equations}. \textbf{43}, 1-24 (2018), \url{https://doi.org/10.1080/03605302.2017.1363229}
\bibitem{MR209834}Schwartz, L. Théorie des distributions. (Hermann, Paris,1966), Nouvelle édition, entiérement corrigée, refondue et augmentée
\bibitem{MR329492}Osterwalder, K. \& Schrader, R. Axioms for Euclidean Green's functions. {\em Comm. Math. Phys.}. \textbf{31} pp. 83-112 (1973), \url{http://projecteuclid.org/euclid.cmp/1103858969}
\bibitem{MR481057}Berg, C. \& Forst, G. Potential theory on locally compact abelian groups. (Springer-Verlag, New York-Heidelberg,1975)
\bibitem{MR376002}Osterwalder, K. \& Schrader, R. Axioms for Euclidean Green's functions. II. {\em Comm. Math. Phys.}. \textbf{42} pp. 281-305 (1975), \url{http://projecteuclid.org/euclid.cmp/1103899050}, With an appendix by Stephen Summers
\bibitem{MR1388887}Alzer, H. On some inequalities for the gamma and psi functions. {\em Math. Comp.}. \textbf{66}, 373-389 (1997), \url{https://doi.org/10.1090/S0025-5718-97-00807-7}
\bibitem{Der2023}Dereziński, J., Gaß, C. \& Ruba, B. Generalized integrals and point interactions. {\em Journal Of Physics: Conference Series}. \textbf{2667}, 012071 (2023,12), \url{https://dx.doi.org/10.1088/1742-6596/2667/1/012071}
\bibitem{MR374877}Nikol'skii, S. Approximation of functions of several variables and imbedding theorems. (Springer-Verlag, New York-Heidelberg,1975), Translated from the Russian by John M. Danskin, Jr.
\bibitem{MR3309890}Serfaty, S. Coulomb gases and Ginzburg-Landau vortices. (European Mathematical Society (EMS), Zürich,2015), \url{https://doi.org/10.4171/152}
\bibitem{MR147616}Riesz, M. The analytic continuation of the Riemann-Liouville integral in the hyperbolic case. {\em Canadian J. Math.}. \textbf{13} pp. 37-47 (1961), \url{https://doi.org/10.4153/CJM-1961-003-x}
\bibitem{serfaty2024lecturescoulombrieszgases}Serfaty, S. Lectures on Coulomb and Riesz gases.  (2024), \url{https://arxiv.org/abs/2407.21194}

\end{thebibliography}

\end{document}